%
%
%
%
\documentclass[12pt]{amsart}
\usepackage{colordvi,graphicx,ulem}
\usepackage{amssymb,amsmath,hyperref}
\usepackage{bm} 
\headheight=8pt
\topmargin=3pt
\textheight=611pt
\textwidth=456pt
\oddsidemargin=6pt
\evensidemargin=6pt
\numberwithin{equation}{section}

\newtheorem{theorem}{Theorem}[section]
\newtheorem{proposition}[theorem]{Proposition}
\newtheorem{lemma}[theorem]{Lemma}
\newtheorem{corollary}[theorem]{Corollary}
\theoremstyle{remark}
\newtheorem{example}[theorem]{Example}
\newtheorem{remark}[theorem]{Remark}

\newcommand{\C}{{\mathbb C}}
\renewcommand{\P}{{\mathbb P}}
\newcommand{\calM}{{\mathcal M}}
\newcommand{\calN}{{\mathcal N}}

\DeclareMathOperator{\Span}{span}

\DeclareMathOperator{\GL}{GL}

\DeclareMathOperator{\Mat}{Mat}
\DeclareMathOperator{\rank}{rank}

\newcommand{\Fl}{{\mathbb F}\ell}
\newcommand{\adot}{a_\bullet}
\newcommand{\bdot}{b_\bullet}
\newcommand{\Edot}{E_\bullet}
\newcommand{\Epdot}{E'_\bullet}
\newcommand{\Fdot}{F_\bullet}
\newcommand{\Fpdot}{F'_\bullet}
\newcommand{\Rdot}{R_\bullet}
\DeclareMathOperator{\Gr}{Gr}
\newcommand{\balpha}{{\bm \alpha}}
\newcommand{\be}{{\bf e}}


\newcommand{\I}{\raisebox{-1pt}{\includegraphics{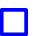}}}
\newcommand{\sI}{{\includegraphics{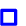}}}

\newcommand{\smMat}[2]{\bigl(\begin{smallmatrix}{#1}\\{#2}\end{smallmatrix}\bigr)}
%
\newcommand{\defcolor}[1]{\Blue{#1}}
\newcommand{\demph}[1]{\defcolor{{\sl #1}}}
\title[Certifiable Numerical Computation in Schubert Calculus]{A primal-dual formulation for certifiable computations in Schubert calculus} 
\author{Jonathan D. Hauenstein, Nickolas Hein, Frank Sottile}
\address{Jonathan D. Hauenstein\\
         Department of Applied and Computational Mathematics and Statistics\\
         University of Notre Dame\\
         Notre Dame\\
         Indiana \ 46556\\
         USA}
\email{hauenstein@nd.edu}
\urladdr{\url{http://www.nd.edu/~jhauenst}}
\address{Nickolas Hein \\
         Department of Mathematics\\
         University of Nebraska at Kearney\\
         Kearney\\
         Nebraska \ 68849\\
         USA}
\email{heinnj@unk.edu}
\urladdr{\url{http://www.unk.edu/academics/math/faculty/About_Nickolas_Hein/}}
\address{Frank Sottile \\
         Department of Mathematics\\
         Texas A\&M University\\
         College Station\\
         Texas \ 77843\\
         USA}
\email{sottile@math.tamu.edu}
\urladdr{\url{http://www.math.tamu.edu/~sottile/}}
\thanks{Research of Hauenstein supported in part by NSF grant DMS-1262428 and DARPA YFA.
        Research of Hein and Sottile supported in part by NSF grant DMS-0915211.}
\subjclass[2010]{14N15, 14Q20.}
%
%
\keywords{Schubert calculus, square systems, certification.}
\begin{document}
\begin{abstract}
 Formulating a Schubert problem as the solutions to a system of equations in
 either Pl\"ucker space or in the local coordinates of a Schubert cell
 typically involves more equations than variables.
 We present a novel primal-dual formulation of any Schubert problem on a Grassmannian or flag
 manifold as a system of bilinear equations with the same number of equations as variables.
 This formulation enables numerical computations in the Schubert calculus to be 
 certified using algorithms based on Smale's $\alpha$-theory.
\end{abstract}
\maketitle
%
\section*{Introduction}

Numerical algebraic geometry provides fast, efficient methods to approximate all solutions
to a system of polynomial equations~\cite{SW05}.
When the system is \demph{square} in that the number of equations is equal to the number
of variables, Smale's $\alpha$-theory~\cite[Ch.~8]{BCSS} gives methods to certify that
these approximate solutions correspond to actual solutions.
This is implemented in software which can be used to prove that all solutions have been
found and to certifiably count the number of real solutions~\cite{HS12}.

The Schubert calculus of enumerative geometry has come to mean all problems of determining
the linear subspaces of a vector space that have specified positions with respect to
other, fixed but general subspaces.
Originating in work of Schubert~\cite{Sch1879,Sch1886c,Sch1886a,Sch1886b}, it has long been an
active subject, undergoing significant recent development.
As a rich and well-understood class of geometric problems, the Schubert calculus is a
laboratory for the systematic study of new phenomena in enumerative
geometry~\cite{MSJ,IHP}, particularly as Schubert problems are readily formulated and
studied on a computer.

There are two traditional formulations of Schubert problems, one in global Pl\"ucker
coordinates and one in the local coordinates of a Schubert cell~\cite{Fulton}.
The first involves quadratic Pl\"ucker equations and linear equations, while the second
uses no Pl\"ucker equations and the linear equations are replaced by 
minors of matrices.
Both formulations typically involve far more equations than variables.
The second method was used to provide evidence for the Shapiro conjecture and discover its
generalizations~\cite{EG02,MT,MTV_Annals,MTV_R,So_FRSC} through the 
systematic study of many billions of instances of several thousand Schubert
problems~\cite{secant,monotone_MEGA,Lower,RSSS,So00}.
These were exact computations, using symbolic methods.
Numerical computation should enable the study
of far more and far larger Schubert problems as well as more delicate monodromy
computations on a computer.
For this hope to become a reality, algorithms tailored to Schubert calculus need to be
developed and implemented---this is starting to occur~\cite{HSS98,svv}---and methods to
certify computation need to be developed.

This second point was highlighted in a proof-of-concept paper to compute monodromy
(Galois groups) of some Schubert problems~\cite{LS}.
This included numerically solving a Schubert problem with 17589 solutions and
numerically computing enough monodromy permutations to conclude that the monodromy was the
full symmetric group $S_{17589}$.
These computations were not proofs in the ordinary sense as they relied upon software
heuristics for their conclusions.
This nevertheless inspired the development and implementation of methods to certify
computations in numerical algebraic geometry. 

Certification in numerical algebraic geometry rests upon work of Smale, who studied the
convergence of iterations of Newton's method applied to a point $x_0$, considered
to be an approximate solution to a system $F(x)=0$ of polynomial equations, when the
system is square~\cite{S86}.
This is called \demph{$\alpha$-theory} for the existence of an absolute constant $\alpha_0>0$ such
that when a constant $\alpha=\alpha(x_0,F)>0$ satisfies $\alpha<\alpha_0$, Newton iterates
starting at $x_0$ converge quadratically (doubling significant digits with each step) to a
solution to $F(x)=0$.
This was implemented in software~\cite{HS12} as an {\it a posteri} test to certify the
output of a numerical solver while 
Beltr{\'a}n  and Leykin~\cite{BL12,BL13} developed and implemented a certified
path-tracking algorithm which also certifies monodromy computations. 
In an important special case, {\it a posteri} estimates suffice to certify
monodromy~\cite{HHL}. 

Traditional formulations of Schubert problems 
typically lead to overdetermined systems (more equations than variables) so 
algorithms based on $\alpha$-theory cannot be used to
certify traditional numerical computation in Schubert calculus.
We present a novel formulation of any Schubert problem in primal-dual
coordinates as a square system of bilinear equations.
This will enable certification based on $\alpha$-theory, both {\it a posteri}
certification of approximate solutions and certified path-tracking.

The set of all $\ell$-dimensional linear subspaces ($\ell$-planes) in a vector space $V$ having
specified position with respect to a fixed subspace forms a \demph{Schubert subvariety} of
the Grassmannian.
A Schubert problem is formulated as the intersection of a collection of
Schubert varieties in a Grassmannian which are in general position.
A natural extension is to consider a Schubert problem to be
given by intersecting a collection of Schubert subvarieties in general position in a flag
manifold. 
We give a square primal-dual formulation of any Schubert problem on any 
flag manifold.

The main ideas are well illustrated for the Grassmannian.
In Section~\ref{S:Schubert_Calculus} we describe Schubert varieties in the Grassmannian and
give a traditional formulation of a Schubert problem as an overdetermined system of
determinantal equations in local coordinates for the Grassmannian.
We introduce our primal-dual reformulation in Section~\ref{S:dual}, where we use duality to
recast Schubert problems as a square system of bilinear equations in a larger space.
We improve this, using a hybrid approach and  more sophisticated local
coordinates to our primal-dual formulation in Section~\ref{S:improve}.
Finally, in Section~\ref{S:FlagVariety} we explain how this extends to all Schubert problems
in a flag manifold.

\section{Modeling Schubert problems}\label{S:Schubert_Calculus}

We describe Schubert varieties, Schubert problems, and how to model
them as systems of equations.
For a positive integer $n$, write $[n]$ for the set $\{1,\dotsc,n\}$.
Fix $\ell$ to be a positive integer at most $n{-}1$.
Let $w_0\in S_n$ be the permutation of $[n]$ such that $w_0(i)=n{+}1{-}i$.

\subsection{Schubert problems}

Let $V$ be an $n$-dimensional complex vector space.
The \demph{Grassmannian} \defcolor{$\Gr(\ell,V)$} of $\ell$-planes of $V$ is a complex
manifold of dimension $\ell(n{-}\ell)$ that is a subvariety of Pl\"ucker space,
$\P(\wedge^\ell V)\simeq \P^{\binom{n}{\ell}-1}$.
We often write $\Gr(\ell,n)$ to denote $\Gr(\ell,\C^n)$.

The Grassmannian has distinguished Schubert subvarieties.
A \demph{(complete) flag $\Fdot$} is a sequence 
\[
   \Fdot\ \colon\ 
    \{0\}\ \subsetneq\ F_1\ \subsetneq\ F_2\ \subsetneq
     \ \dotsb\ \subsetneq\ F_n\ =\ V\,,
\]
of linear subspaces of $V$ where $\dim F_i=i$.
The position of a $\ell$-plane $H$ with respect to the flag $\Fdot$ is 
the increasing sequence
$\alpha\colon 1\leq\alpha_1<\dotsb<\alpha_\ell\leq n$ where 
\[
   \alpha_i\ :=\ \min\{j\ \colon\ \dim(H\cap F_j) = i\}\,,\ \mbox{for\ }
      i=1,\dotsc,\ell\,.
\]
Write $\alpha:=\alpha(H,\Fdot)$ and call $\alpha$ a \demph{Schubert condition}. 
All such increasing sequences may occur.
Write \defcolor{$\binom{[n]}{\ell}$} for the set of Schubert conditions which is partially
ordered by coordinatewise comparison: $\alpha\leq\beta$ if and only if
$\alpha_i\leq\beta_i$ for each $i=1,\dotsc,\ell$.

Let $\defcolor{X^\circ_\alpha\Fdot}$ be the set of all $\ell$-planes $H$ with
$\alpha(H,\Fdot)=\alpha$, which is a \demph{Schubert cell}. 
This has dimension $\defcolor{\dim(\alpha)}:=(\alpha_1-1)+\dotsb+(\alpha_\ell-\ell)$. 
We have the Bruhat decomposition of the Grassmannian
\[
   \Gr(\ell,n)\ =\ \bigsqcup_{\alpha\in\binom{[n]}{k}} X^\circ_\alpha\Fdot\,.
\]

Given a Schubert condition $\alpha$ and a flag $\Fdot$, define the 
\demph{Schubert subvariety $X_\alpha\Fdot$} by 
 \begin{equation}\label{Eq:Schubert_Variety}
    X_\alpha\Fdot\ :=\ 
    \{H\in\Gr(\ell,V)\mid \dim(H\cap F_{\alpha_i})\geq i\,,\mbox{ for}\ i=1\dotsc,\ell\}\,,
 \end{equation}
which is the closure of the Schubert cell $X^\circ_\alpha\Fdot$.
This has dimension $\dim(\alpha)$ and thus codimension
$\defcolor{|\alpha|}:=\ell(n{-}\ell)-\dim(\alpha)=\ell(n{-}\ell)-\sum_i(\alpha_i-i)$.

\begin{example}
 Suppose that $\ell=2$, $n=8$, and $\Fdot$ is a complete flag in $\C^8$.
 Then
\[
   X_{(4,8)}\Fdot\ =\ 
    \{ H\in\Gr(2,8)\mid H\cap F_4\neq \{0\}\, \}\,,
\]
 as the remaining condition is that $H\subset F_8=\C^8$, which always holds.
 The codimension of $X_{(4,8)}\Fdot$ is 
 $2(8{-}2)-(4{-}1)-(8{-}2)=3$.\hfill{$\diamond$}
\end{example}

A \demph{Schubert problem} is a list $\balpha=(\alpha^1,\dotsc,\alpha^s)$ with
$\alpha^i\in\binom{[n]}{\ell}$ satisfying $|\alpha^1|+\dotsb+|\alpha^s|=\ell(n{-}\ell)$.
By Kleiman's Transversality Theorem~\cite{KL74}, if $\Fdot^1,\dotsc,\Fdot^s$ are general flags,
then the intersection
 \begin{equation}\label{Eq:SchubertProblem}
   X_{\alpha^1}\Fdot^1\ \cap\ 
   X_{\alpha^2}\Fdot^2\ \cap\ \dotsb\ \cap\ 
   X_{\alpha^s}\Fdot^s\,,
 \end{equation}
is transverse and consists of finitely many points.
Such an intersection~\eqref{Eq:SchubertProblem} is an \demph{instance} of the Schubert
problem $\balpha$.

\begin{example}\label{Ex:SP}
 Suppose that $\balpha=((4,8),(4,8),(4,8),(4,8))$, which is a Schubert problem on
 $\Gr(2,8)$. 
 Let $\Fdot^1,\Fdot^2,\Fdot^3,\Fdot^4$ be general flags.
 The corresponding instance is
 \begin{multline*}
   \qquad X_{(4,8)}\Fdot^1\,\cap\, X_{(4,8)}\Fdot^2\,\cap\,
   X_{(4,8)}\Fdot^3\,\cap\, X_{(4,8)}\Fdot^4
    \\ 
     =\ \{ H\in\Gr(2,8)\mid H\cap F^i_4\neq \{0\}\ i=1,2,3,4 \}\,. \qquad
 \end{multline*}
 That is, the 2-planes that have non-trivial intersection with four 4-planes in general
 position in $\C^8$.
 This can be shown to have four solutions.\hfill{$\diamond$}
\end{example}

We may use the global Pl\"ucker coordinates to formulate a Schubert problem as the
solutions to a system of equations.
In the projective space $\P^{\binom{n}{\ell}-1}$, the Grassmannian is defined by quadratic
Pl\"ucker equations and Schubert varieties are given by certain linear equations.
This is not considered to be an efficient encoding of the Schubert problem.

For instance, in the Schubert problem of Example~\ref{Ex:SP}, Pl\"ucker 
space has dimension $\binom{8}{2}-1=27$ and there are $\binom{8}{4}=70$ linearly
independent quadratic Pl\"ucker equations.
These cut out the twelve-dimensional Grassmannian $\Gr(2,8)$ and each Schubert variety
$X_{(4,8)}\Fdot$ is cut out by six independent linear 
equations, for a total of 94 equations.

\subsection{Local coordinates for Schubert varieties}

Local coordinates for Schubert varieties in $\Gr(\ell,V)$ are described
in~\cite[Ch.~10]{Fulton}. 
Let $M$ be a full rank $m\times n$ matrix with $m\leq n$.
If $\ell\leq m$, let \defcolor{$M_\ell$} be the first $\ell$ rows of $M$
and set $\defcolor{R_\ell(M)}$ to be the row span of $M_\ell$.
For this, fix a basis $\be_1,\dotsc,\be_n$ of $V\simeq\C^n$ corresponding to the
columns of $M$.
If $\adot=a_1<\dotsb<a_t\subset[n]$ and $a_t\leq m$ then \defcolor{$R_{\adot}(M)$} is the
(partial) flag of linear subspaces
\[
    R_{a_1}(M)\ \subsetneq\ 
    R_{a_2}(M)\ \subsetneq\ \dotsb\ \subsetneq\ 
    R_{a_t}(M)\,,
\]
and we write \defcolor{$\Rdot(M)$} for the complete flag obtained from an invertible
$n\times n$ matrix $M$.

If $N$ is an $n\times m$ matrix, we similarly have the column span \defcolor{$C_\ell(N)$}
and the flag $C_{\adot}(N)$.
These are subspaces of the dual vector space $V^*$ equipped with the basis
$\be^*_1,\dotsc,\be^*_n$ of $V^*$ dual to $\be_1,\dotsc,\be_n$, which corresponds to the
rows of $N$.

The association $M\mapsto R_{\ell}(M)$ realizes the open subset of rank $\ell$ matrices in
$\Mat_{\ell\times n}(\C)$ as a $GL(\ell,\C)$-principal bundle over the
Grassmannian, called the Stiefel manifold.
We identify subsets of the Stiefel manifold that parametrize Schubert
varieties and their intersections, and call these 
subsets \demph{Stiefel coordinates}.

Every $\ell$-plane $H$ is the row space of a unique unique echelon matrix, 
$M\in\Mat_{\ell\times n}(\C)$. 
That is, the last entry (the pivot) in row $i$ of $M$ is $1$, and if $\alpha_i$ is the
column of the pivot, then $\alpha_1<\dotsb<\alpha_\ell$ and the $1$ in position
$(i,\alpha_i)$ is the only nonzero element of its column.
Here are echelon matrices for $\alpha=(4,8)$ and $\alpha=(2,4,5,7)$ with
$(\ell,n)=(2,8)$ and $(4,7)$, respectively, and where $m_{i,j}$ represents some complex
number.  
 \begin{equation}\label{Eq:echelon_matrices}
   \left(\!\begin{array}{cccccccc} \!m_{1,1}&m_{1,2}&m_{1,3}&\!1&\!0&0&0&\!0\!\\
             \!m_{2,1}&m_{2,2}&m_{2,3}&\!0\!&m_{2,5}&m_{2,6}&m_{2,7}&\!1\!\end{array}\!\right)
    \!\qquad
   \left(\begin{array}{ccccccc} 
         \!m_{1,1}&\!1\!&  0    &\!0\!&\!0\!&0&\!0\!\\
        \!m_{2,1}&\!0\!&m_{2,3}&\!1\!&\!0\!&0&\!0\!\\
         \!m_{3,1}&\!0\!&m_{3,3}&\!0\!&\!1\!&0&\!0\!\\
         \!m_{4,1}&\!0\!&m_{4,3}&\!0\!&\!0\!&m_{4,6}&\!1\!
    \end{array}\right)
 \end{equation}

Let \defcolor{$\calM_\alpha$} be the set of echelon matrices with pivots $\alpha$.
The number of  unspecified entries in matrices in $\calM_\alpha$ is $\dim(\alpha)$, which shows
that $\calM_\alpha\simeq\C^{\dim(\alpha)}$.
Let $\be_1,\dotsc,\be_n$ be an ordered basis of $\C^n$ corresponding to the columns of our
matrices.
If we define the standard coordinate flag \defcolor{$\Edot$} so that 
$E_j:=\Span\{\be_1,\dotsc,\be_j\}$, then a $\ell$-plane $H$ has 
position $\alpha$ with respect to the flag $\Edot$, so that $H\in X^\circ_\alpha\Edot$,
if and only if its echelon matrix has pivots $\alpha$.
As distinct echelon matrices give distinct $\ell$-planes and vice-versa, 
this shows that $R_\ell\colon \calM_\alpha\to X^\circ_\alpha\Edot$ is an isomorphism and thus 
$X^\circ_\alpha\Edot\simeq\C^{\dim(\alpha)}$.
Since the Schubert cell is dense in the Schubert variety, $\calM_\alpha$
gives local coordinates for the Schubert variety~$X_\alpha\Edot$.

These echelon matrices $\calM_\alpha$ also give local coordinates for arbitrary Schubert
varieties. 
Let $\Phi$ be a full rank $n\times n$ matrix.
If $M\in\calM_\alpha$, then we invite the reader to check that 
$R_\ell(M\Phi)\in X^\circ_\alpha\Rdot(\Phi)$.

Let $\Epdot$ be the coordinate flag opposite to $\Edot$, where 
$E'_i:=\Span\{\be_{n+1-i},\dotsc,\be_n\}$ and 
$\dim E_i\cap E'_j=\max\{0,i{+}j{-}n\}$, so that $\Edot$ and $\Epdot$ are in linear
general position.
For $\beta\in\binom{[n]}{\ell}$, let $\calM^\beta$ be the set of echelon matrices in which row  
$j$ has its first non-zero entry of $1$ in column $n{+}1{-}\beta_{\ell{+}1{-}j}$, and this
entry is the only nonzero entry in its column, but the remaining entries are
unconstrained. 
Rotating a matrix $180^\circ$ gives a bijection between $\calM^\beta$ and $\calM_\beta$, and the
map $M\mapsto R_\ell(M)$ is a bijection between $\calM^\beta$ and the Schubert 
cell~$X^\circ_\beta\Epdot$. 
Here is a typical matrix in $\calM^{(3,7,9)}$ for $\Gr(3,9)$,
\[
   \left(\begin{array}{ccccccccc} 
         1&m_{1,2}&0&m_{1,4}&m_{1,5}&m_{1,6}&0&m_{1,8}&m_{1,9}\\
         0&   0  &1&m_{2,4}&m_{2,5}&m_{2,6}&0&m_{2,8}&m_{2,9}\\
         0&   0  &0&  0   &  0  &   0   &1&m_{3,8}&m_{3,9}
    \end{array}\right)\ .
\] 

Suppose that we have indices $\alpha,\beta\in\binom{[n]}{\ell}$ with $\alpha_i+\beta_{\ell+1-i}\leq
n$ for $i=1,\dotsc,a$.
Consider the set \defcolor{$\calM^\beta_\alpha$} of matrices $(m_{i,j})$ where 
 \begin{eqnarray*}
   m_{i,j}=0 &\mbox{if}& i<\beta_{\ell+1-i}\quad\mbox{or}\quad \alpha_i<j\\
   m_{i,j}=1 &\mbox{if}& i=\beta_{\ell+1-i}\ ,
 \end{eqnarray*}
and $m_{i,j}$ is otherwise unconstrained.
Here is a typical matrix in $M^{(5,7,8,11)}_{(6,8,10,11)}$,
\[
   \left(\begin{array}{ccccccccccc} 
         1&m_{1,2}&m_{1,3}&m_{1,4}&m_{1,5}&m_{1,6}&0&0&0&0&0\\
         0&   0  &0&1&m_{2,5}&m_{2,6}&m_{2,7}&m_{2,8}&0&0&0\\
         0&   0  &0&0&1&m_{3,6}&m_{3,7}&m_{3,8}&m_{3,9}&m_{3,10}&0\\
         0&   0  &0&0& 0 &0&  1  &m_{4,8}&m_{4,9}&m_{4,10}&m_{4,11}
    \end{array}\right)\ .
\]
If $M\in \calM^\beta_\alpha$, then $R_\ell(M)\in X_\alpha\Edot\cap X_\beta\Epdot$.
Indeed, as the last nonzero element $m_{i,\alpha_i}$ in row $i$ is in column $\alpha_i$, we
conclude that $H\in X_\alpha\Edot$. 
To see that $H\in X_\alpha\Epdot$, simply read the rows in reverse order.

A full rank $n\times n$ matrix $\Phi$ gives a complete flag $\Fdot=\Rdot(\Phi)$.
If we let $F'_i$ be the row span of the last $i$ rows of $\Phi$, we get a second flag
$\Fpdot=\Rdot(w_0\Phi)$ which is opposite to $\Fdot$ in that
$\dim F_i\cap F'_j=\max\{0,i{+}j{-}n\}$, and given any two such flags, there is a full rank
matrix $\Phi$ which gives rise to them.
We will refer to $\Fdot$ and $\Fpdot$ as the pair of opposite flags given by $\Phi$.
Note that $\Edot$ and $\Epdot$ are given by the identity matrix.

\begin{proposition}\label{P:localCoordinates}
 Let $\Fdot$ and $\Fpdot$ be opposite flags given by a full rank $n\times n$ matrix
 $\Phi$, $\Fdot=\Rdot(\Phi)$ and $\Fpdot=\Rdot(w_0\Phi)$. 
 The set $\calM_\alpha$ of echelon matrices with pivots $\alpha$ is an affine
 space isomorphic to the Schubert cell $X^\circ_\alpha\Fdot$ via the map
 $\calM_\alpha\ni M\mapsto R_\ell(M\Phi)$.

 When $\alpha_i+\beta_{\ell+1-i}\leq n$ for $i=1,\dotsc,\ell$, the set $\calM_\alpha^\beta$ is
 an affine space which parametrizes a dense subset of the intersection 
 $X_\alpha\Fdot\cap X_\beta\Fpdot$ via  the map $\calM_\alpha^\beta\ni M\mapsto R_\ell(M\Phi)$.
 (If $\alpha_i+\beta_{\ell+1-i}>n$ for some $i$, then both
 $\calM_\alpha^\beta$ and  $X_\alpha\Fdot\cap X_\beta\Fpdot$ are empty.)
\end{proposition}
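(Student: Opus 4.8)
The plan is to read the first assertion off the discussion preceding the proposition, and to reduce the second to the case $\Phi=\Id$ and then combine a dimension count with the classical geometry of intersections of opposite Schubert varieties. For the first assertion: $\calM_\alpha$ is the solution set of the linear equations defining echelon form with pivots $\alpha$, so its unspecified entries are global coordinates and $\calM_\alpha\cong\C^{\dim(\alpha)}$, and we already know that $R_\ell\colon\calM_\alpha\to X^\circ_\alpha\Edot$ is an isomorphism. Right multiplication by $\Phi$ is an automorphism of $\Gr(\ell,V)$ taking a row space $H$ to $H\Phi$ and the coordinate flag $\Edot=\Rdot(\Id)$ to $\Rdot(\Phi)=\Fdot$; since it preserves every number $\dim(H\cap E_j)$, it restricts to an isomorphism $X^\circ_\alpha\Edot\to X^\circ_\alpha\Fdot$. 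Composing with $R_\ell$ and using $R_\ell(M)\Phi=R_\ell(M\Phi)$ gives the first claim.

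For the second assertion, the same automorphism also sends $\Epdot=\Rdot(w_0)$ to $\Rdot(w_0\Phi)=\Fpdot$, hence $X_\alpha\Edot\cap X_\beta\Epdot$ to $X_\alpha\Fdot\cap X_\beta\Fpdot$, so it suffices to treat $\Phi=\Id$. First I would note that $\calM_\alpha^\beta$ is again a coordinate affine subspace of $\Mat_{\ell\times n}(\C)$ and count its free entries, obtaining $\dim\calM_\alpha^\beta=\dim(\alpha)-|\beta|$; the hypothesis on $\alpha$ and $\beta$ is exactly the requirement that the prescribed row supports be nonempty, and when it fails $\calM_\alpha^\beta$ is empty, as is $X_\alpha\Edot\cap X_\beta\Epdot$ --- the latter because an $\ell$-plane $H$ in the intersection satisfies $\dim(H\cap E_{\alpha_i})+\dim(H\cap E'_{\beta_{\ell+1-i}})>\ell$, forcing a nonzero vector in $E_{\alpha_i}\cap E'_{\beta_{\ell+1-i}}$, whereas that intersection of coordinate subspaces vanishes precisely when the hypothesis fails. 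The inclusion $R_\ell(\calM_\alpha^\beta)\subseteq X_\alpha\Edot\cap X_\beta\Epdot$ is already in the text, so what remains is density.

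To get density I would argue that $R_\ell$ is injective on the dense open subset of $\calM_\alpha^\beta$ where each entry $m_{i,\alpha_i}$ is nonzero --- there row $i$ of $M$ has its rightmost nonzero entry in column $\alpha_i$, so $M$ row-reduces to a matrix of $\calM_\alpha$ from which it is recovered --- whence $\overline{R_\ell(\calM_\alpha^\beta)}$ is an irreducible closed subvariety of $X_\alpha\Edot\cap X_\beta\Epdot$ of dimension $\dim(\alpha)-|\beta|$. Concluding then requires knowing that, for opposite flags, $X_\alpha\Edot\cap X_\beta\Epdot$ is, when nonempty, itself irreducible of dimension $\dim(\alpha)-|\beta|$; granting this, an irreducible closed subvariety of full dimension must be the whole variety, and density follows. (Alternatively one identifies the closure of the single open stratum $X^\circ_\alpha\Edot\cap X^\circ_\beta\Epdot$ with the intersection and checks that this stratum lies in $R_\ell(\calM_\alpha^\beta)$, since an echelon representative of any of its planes rescales into $\calM_\alpha^\beta$.) The main obstacle is precisely this classical input --- the properness, irreducibility, and dimension of the intersection of opposite Schubert varieties --- for which one either cites the theory of Richardson varieties or argues directly from the Bruhat decomposition and the known dimensions of intersections of pairs of opposite Schubert cells.
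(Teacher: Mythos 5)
Your proposal follows essentially the same route as the paper's proof. The paper also treats the first assertion as already established by the preceding discussion, and for the second assertion it argues exactly as you do: $R_\ell$ is injective on the locus where all $m_{i,\alpha_i}$ are nonzero, $\calM_\alpha^\beta$ is an affine space whose dimension is read off by counting free entries, and $X_\alpha\Fdot\cap X_\beta\Fpdot$ is irreducible of that same dimension, whence the image is dense. The one place you are more careful than the paper is in isolating what the irreducibility/dimension claim for the intersection of opposite Schubert varieties actually rests on: the paper dismisses it with ``this is because the flags are in general position,'' but as you correctly observe, general position (via Kleiman) only yields transversality and hence the expected dimension --- the irreducibility is the nontrivial Richardson-variety fact, and it genuinely is needed to upgrade ``dense in one component'' to ``dense in the intersection.'' Your reduction to $\Phi=\Id$ by right translation and your explicit verification of the parenthetical emptiness claim are also fine, if slightly more spelled out than the paper's one-line proof; these are not different in substance, just in exposition.
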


It only remains to show that $\calM_\alpha^\beta$ parametrizes
a dense subset of $X_\alpha\Fdot\cap X_\beta\Fpdot$.
This follows as the map is injective on the locus where the entries $m_{i,\alpha_i}$ are
simultaneously nonzero and the two varieties are irreducible of the same dimension
$\ell(n{-}\ell){-}|\alpha|{-}|\beta|$. 
For the intersection of Schubert varieties, this is because the flags are in general
position, and for $\calM_\alpha^\beta$ this is verified by counting parameters.

\subsection{Determinantal equations for Schubert varieties}\label{SS:det}
Let $\alpha\in\binom{[n]}{\ell}$ and let $\Fdot=\Rdot(\Phi)$ be a flag given by a full
rank matrix $\Phi$.
The geometric conditions~\eqref{Eq:Schubert_Variety} defining a Schubert variety
$X_\alpha\Fdot$ give determinantal equations in local coordinates.

If $H= R_\ell(M)$, where $M\in\Mat_{\ell\times n}(\C)$, then 
$\dim(H\cap F_b)\geq c$ if and only if 
\[
   \rank\left( \begin{array}{c}M\\\Phi_b\end{array}\right)\ \leq\ \ell{+}b{-}c\,,
\]
where $\Phi_b$ denotes the submatrix of $\Phi$ given by the first $b$ rows.  
This is characterized by the vanishing of each of the square $\ell{+}b{-}c{+}1$
subdeterminants (minors) of $\smMat{M}{\Phi_b}$. 

\begin{proposition}\label{P:minor_eqns}
 Let $\calM$ be a set of matrices parametrizing a subset $Y$ of\/ $\Gr(\ell,\C^n)$, let $\Phi$ be
 a full-rank matrix, and let $\alpha\in\binom{[n]}{\ell}$.
 Then, in the coordinates $M\in\calM$ for $Y$, the intersection $Y\cap X_{\alpha}\Rdot(\Phi)$ of $Y$
 with the Schubert variety is defined by the vanishing of the square $\ell{+}\alpha_i{-}i{+}1$
 minors of\/ $\smMat{M}{\Phi_{\alpha_i}}$, for $i=1,\dotsc,\ell$.
\end{proposition}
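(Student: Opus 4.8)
The plan is to deduce Proposition~\ref{P:minor_eqns} from the rank criterion established just above it, applied one Schubert condition at a time. Fix $M\in\calM$ and set $H=R_\ell(M)$. By definition~\eqref{Eq:Schubert_Variety}, $H\in X_\alpha\Rdot(\Phi)$ if and only if $\dim(H\cap F_{\alpha_i})\geq i$ for every $i=1,\dotsc,\ell$, where $\Fdot=\Rdot(\Phi)$ and hence $F_{\alpha_i}=R_{\alpha_i}(\Phi)$ is the row span of $\Phi_{\alpha_i}$. So the first step is simply to observe that $Y\cap X_\alpha\Rdot(\Phi)$ is the locus in $\calM$ cut out by the conjunction, over $i=1,\dotsc,\ell$, of the conditions $\dim(R_\ell(M)\cap R_{\alpha_i}(\Phi))\geq i$.

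Next I would apply the rank criterion displayed before the proposition with $(b,c)=(\alpha_i,i)$: the condition $\dim(H\cap F_{\alpha_i})\geq i$ is equivalent to $\rank\smMat{M}{\Phi_{\alpha_i}}\leq \ell+\alpha_i-i$, and this rank inequality is in turn equivalent to the vanishing of all $(\ell+\alpha_i-i+1)\times(\ell+\alpha_i-i+1)$ minors of the $(\ell+\alpha_i)\times n$ matrix $\smMat{M}{\Phi_{\alpha_i}}$. (One should note $\Phi_{\alpha_i}$ has full rank $\alpha_i$, and $M$ has rank $\ell$ on $\calM$, so the relevant minors are the $(\ell+\alpha_i-i+1)$-minors as stated; smaller-size minors need not vanish and larger ones vanish automatically when the stated ones do, since minors of a given size vanishing forces all larger minors to vanish as well.) Intersecting over all $i$ then gives that $Y\cap X_\alpha\Rdot(\Phi)$ is exactly the common zero locus in $\calM$ of the $(\ell+\alpha_i-i+1)$-minors of $\smMat{M}{\Phi_{\alpha_i}}$ for $i=1,\dotsc,\ell$, which is the assertion.

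The only genuine subtlety, and the step I would be most careful about, is the equivalence between the rank inequality $\rank\leq r$ and vanishing of all $(r{+}1)$-minors: this is the standard fact that the rank of a matrix is the largest size of a nonvanishing minor, so that $\rank\smMat{M}{\Phi_{\alpha_i}}\leq r$ iff every $(r{+}1)$-minor vanishes; and one must check that the dimension formula $\dim(H\cap F_b)=\ell+b-\rank\smMat{M}{\Phi_b}$ holds, which follows since $H\cap F_b$ is the kernel of the linear map $V^*\to H^*\oplus F_b^*$ dual to the two inclusions, or equivalently by the elementary identity $\dim(U\cap W)=\dim U+\dim W-\dim(U+W)$ applied with $U=H$, $W=F_b$, and $\dim(U+W)=\rank\smMat{M}{\Phi_b}$. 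Everything else is bookkeeping: the statement is really just the rank criterion repackaged for an arbitrary parametrized subset $Y$, and no new geometric input beyond~\eqref{Eq:Schubert_Variety} is needed.
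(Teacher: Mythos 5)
Your proof is correct and takes essentially the same approach as the paper: the paper states the rank criterion ($\dim(H\cap F_b)\geq c$ iff $\rank\smMat{M}{\Phi_b}\leq \ell+b-c$, characterized by vanishing of the $\ell+b-c+1$ minors) in the paragraph immediately preceding the proposition and treats the proposition as its direct consequence, taking $(b,c)=(\alpha_i,i)$ for $i=1,\dotsc,\ell$. You simply make explicit the dimension identity $\dim(H\cap F_b)=\ell+b-\rank\smMat{M}{\Phi_b}$ and the standard equivalence between rank bounds and vanishing of minors, which is exactly what the paper leaves unstated.
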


This set \defcolor{$I(\alpha,\Phi)$} of minors defining $X_\alpha\Fdot$ is typically more
than is necessary.
For $\alpha\in\binom{[n]}{\ell}$, let \defcolor{$\|\alpha\|$} be the number of sequences 
$\beta\in\binom{[n]}{\ell}$ such that $\beta\not\leq\alpha$.

\begin{lemma}\label{L:Number_eqs}
 The ideal generated by  $I(\alpha,\Phi)$ that defines  $Y\cap X_\alpha\Fdot$ 
 is generated by at most $\|\alpha\|$ polynomials that
 are linear combinations of full rank minors of $M$.
\end{lemma}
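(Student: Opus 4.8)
The plan is to produce, for each of the $\|\alpha\|$ sequences $\beta\in\binom{[n]}{\ell}$ with $\beta\not\leq\alpha$, a single equation $g_\beta$ that is a $\C$-linear combination of the full rank ($\ell\times\ell$) minors of $M$, and then to show that the $g_\beta$ generate the ideal $\langle I(\alpha,\Phi)\rangle$. Write $p_\beta$ for the Pl\"ucker coordinate, the $\ell\times\ell$ minor on the column set $\beta$. Since $\Phi$ is invertible, right multiplication $M\mapsto M\Phi$ induces an automorphism of $\Gr(\ell,\C^n)$ carrying the coordinate flag $\Edot$ to $\Fdot=\Rdot(\Phi)$, hence the Schubert variety $X_\alpha\Edot$ to $X_\alpha\Fdot$; thus $R_\ell(M)\in X_\alpha\Fdot$ exactly when $R_\ell(M\Phi^{-1})\in X_\alpha\Edot$. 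I would therefore set
\[
   g_\beta\ :=\ p_\beta(M\Phi^{-1})\ =\ \sum_{\gamma\in\binom{[n]}{\ell}}\det\bigl((\Phi^{-1})_{\gamma,\beta}\bigr)\,p_\gamma(M)\qquad(\beta\not\leq\alpha),
\]
the second equality being Cauchy--Binet, with $(\Phi^{-1})_{\gamma,\beta}$ the submatrix of $\Phi^{-1}$ on rows $\gamma$ and columns $\beta$; so each $g_\beta$ is a $\C$-linear combination of full rank minors of $M$.

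It remains to prove the ideal equality $\langle I(\alpha,\Phi)\rangle=\langle\,g_\beta:\beta\not\leq\alpha\,\rangle$ in the coordinate ring of $\calM$. By Proposition~\ref{P:minor_eqns} the left-hand side cuts out $Y\cap X_\alpha\Fdot$. By the classical description of Schubert varieties in the Grassmannian --- the straightening law and standard monomial theory --- $X_\alpha\Edot$ is cut out scheme-theoretically inside $\Gr(\ell,\C^n)$ by the Pl\"ucker coordinates $p_\beta$ with $\beta\not\leq\alpha$; applying the automorphism $\cdot\Phi$ and pulling back along $M\mapsto R_\ell(M)$, and using that every matrix in $\calM$ has rank $\ell$, the right-hand side cuts out $Y\cap X_\alpha\Fdot$ as well. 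As the determinantal and Pl\"ucker ideals in question are radical, having a common zero locus forces them to coincide. One can also see the equality by hand: reduce to $\Phi=\Id$ (for which $\Phi_{\alpha_i}$ is the matrix $[I_{\alpha_i}\ 0]$ of the first $\alpha_i$ coordinate rows) via $\smMat{M}{\Phi_{\alpha_i}}=\smMat{M\Phi^{-1}}{[I_{\alpha_i}\ 0]}\,\Phi$ and the invariance of the ideal of $k\times k$ minors under right multiplication by an invertible matrix; a Laplace expansion along the rows coming from $M$ then shows that the determinantal generators using all $\ell$ such rows are, up to sign, precisely the $p_\gamma$ with $\gamma\not\leq\alpha$, while those using fewer are sub-maximal minors which, on the full-rank locus, lie in the ideal generated by these $p_\gamma$.

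The hard part is precisely this last point: the scheme-theoretic --- not merely set-theoretic --- description of Grassmannian Schubert varieties, and the essential use of the full-rank hypothesis on $\calM$. Indeed, over the space of \emph{all} $\ell\times n$ matrices the analogue fails: for $(\ell,n)=(2,4)$ and $\alpha=(2,3)$ the determinantal ideal equals $\langle m_{1,4},m_{2,4}\rangle$, which is strictly larger than the ideal generated by the vanishing Pl\"ucker coordinates $p_{14},p_{24},p_{34}$, the latter being generated in degree two and hence containing neither $m_{1,4}$ nor $m_{2,4}$. Everything else --- the change of basis, Cauchy--Binet, and the bookkeeping that produces exactly $\|\alpha\|$ generators --- is routine.
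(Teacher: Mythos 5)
Your proposal is correct and takes essentially the same approach as the paper: identify the full-rank minors of $M$ with Pl\"ucker coordinates, use that $X_\alpha\Edot$ is cut out in $\Gr(\ell,n)$ by the $p_\beta$ with $\beta\not\leq\alpha$, and transport this via the change of flag carrying $\Edot$ to $\Fdot=\Rdot(\Phi)$ to obtain the $\|\alpha\|$ generators $p_\beta(M\Phi^{-1})$. The only real difference is that you have made explicit what the paper's terser proof leaves tacit --- the Cauchy--Binet expansion of $g_\beta$, the appeal to standard monomial theory for the ideal-theoretic (not merely set-theoretic) description of $X_\alpha\Edot$, and the necessity of the full-rank hypothesis on $\calM$, which your $(\ell,n)=(2,4)$, $\alpha=(2,3)$ counterexample cleanly isolates.
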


\begin{proof}
 In the Pl\"ucker embedding of $\Gr(\ell,n)$, the Pl\"ucker coordinates
 $\{p_\beta\mid\beta\in\binom{[n]}{\ell}\}$ form a basis of the linear forms.
 If $M$ is a $\ell\times n$ matrix with $H=R_{\ell}(M)$, then $p_\beta(H)$ is
 the $\ell\times \ell$ minor of the matrix $M$ formed by its columns indexed by $\beta$.

 The Schubert variety $X_\alpha\Edot$ given by the coordinate flag $\Edot$ is defined
 in $\Gr(\ell,n)$ by the vanishing of the Pl\"ucker coordinates $p_\beta$ for
 $\beta\not\leq\alpha$. 
 If $g$ is a matrix sending $\Edot$ to $\Fdot$, then it acts on Pl\"ucker space 
 sending  $X_\alpha\Edot$ to  $X_\alpha\Fdot$ and thus  $X_\alpha\Fdot$ is defined by the
 linear forms $g(p_\beta)$ with  $\beta\not\leq\alpha$. 
 Thus $Y\cap X_\alpha\Fdot$ is defined by the linear combinations of minors
 $\{g(p_\beta)(H)\mid \beta\not\leq\alpha\}$, and this set has cardinality $\|\alpha\|$. 
\end{proof}

\begin{corollary}
 The Schubert variety $X_\alpha\Fdot$ is defined by $|\alpha|>0$ equations if and only if
 either $\ell=1$, $\ell=n{-}1$, or $\alpha=(n{-}\ell,n{-}\ell+2,\dotsc,n)$ so that $1=|\alpha|=\|\alpha\|$. 
\end{corollary}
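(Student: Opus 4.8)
The plan is to read the corollary through Lemma~\ref{L:Number_eqs}. Applying that lemma with $Y=\Gr(\ell,\C^n)$, the Schubert variety $X_\alpha\Fdot$ is cut out by the $\|\alpha\|$ linear combinations of maximal minors coming from the coordinates $p_\beta$ with $\beta\not\le\alpha$; since $X_\alpha\Fdot$ has codimension $|\alpha|$ in the irreducible variety $\Gr(\ell,\C^n)$, it cannot be defined by fewer than $|\alpha|$ equations. So "$X_\alpha\Fdot$ is defined by $|\alpha|$ equations" is the assertion that this presentation is as small as possible, i.e.\ $\|\alpha\|=|\alpha|$, and the corollary reduces to the combinatorial claim: $\|\alpha\|=|\alpha|$ and $|\alpha|>0$ if and only if $\ell=1$, $\ell=n{-}1$, or $\alpha$ is the unique Schubert condition of codimension one, $(n{-}\ell,n{-}\ell+2,\dotsc,n)$, in which last case $|\alpha|=\|\alpha\|=1$.

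To prove this I would pass to partitions. The assignment $\beta\mapsto\mu(\beta)$, where $\mu(\beta)_i:=(n{-}\ell{+}i){-}\beta_i$, is an order-reversing bijection from $\binom{[n]}{\ell}$ onto the set of partitions contained in the $\ell\times(n{-}\ell)$ rectangle $R$, and $|\mu(\beta)|=|\beta|$. Writing $\lambda:=\mu(\alpha)$, the condition $\beta\not\le\alpha$ corresponds to $\mu(\beta)\not\supseteq\lambda$, and the partitions $\mu\subseteq R$ with $\mu\not\supseteq\lambda$ split into those with $\mu\subsetneq\lambda$ and those incomparable to $\lambda$, so that
\[
  \|\alpha\|\ =\ \bigl(\#\{\mu\subseteq\lambda\}-1\bigr)\ +\ \#\{\mu\subseteq R: \mu\text{ incomparable to }\lambda\}\,.
\]
A maximal chain from $\emptyset$ to $\lambda$ in Young's lattice shows $\#\{\mu\subseteq\lambda\}\ge|\lambda|{+}1$, and if $\lambda$ contains a copy of $(2,1)$ that chain omits one of $(2),(1,1)$, giving $\#\{\mu\subseteq\lambda\}\ge|\lambda|{+}2$; hence $\#\{\mu\subseteq\lambda\}=|\lambda|{+}1$ exactly when $\lambda$ is a single row, a single column, or empty. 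Therefore $\|\alpha\|\ge|\alpha|$ always, with equality if and only if \textup{(a)} $\lambda$ is a single row, a single column, or empty, and \textup{(b)} every partition $\mu\subseteq R$ is comparable to $\lambda$.

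It then remains to solve \textup{(a)} and \textup{(b)}. If $\ell=1$ then $R$ is a single row, so its subpartitions form a chain and \textup{(a)},\textup{(b)} hold for every $\alpha$; the case $\ell=n{-}1$ is symmetric with $R$ a single column. If instead $2\le\ell\le n{-}2$, then both $(2)$ and $(1,1)$ fit inside $R$; a single row $(a)$ with $a\ge2$ is incomparable to $(1,1)$ and a single column $(1^b)$ with $b\ge2$ is incomparable to $(2)$, so \textup{(b)} forces $\lambda\in\{\emptyset,(1)\}$, i.e.\ $\alpha$ is the maximal condition $(n{-}\ell{+}1,\dotsc,n)$ (with $|\alpha|=0$) or $\alpha=(n{-}\ell,n{-}\ell+2,\dotsc,n)$ (with $|\alpha|=\|\alpha\|=1$). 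Imposing $|\alpha|>0$ removes the maximal condition and leaves exactly the three advertised possibilities; and in each of them Lemma~\ref{L:Number_eqs} together with the codimension bound already yields that $X_\alpha\Fdot$ is cut out by exactly $|\alpha|$ equations. The only delicate point is the reduction in the first paragraph: I read "defined by $|\alpha|$ equations" as the statement $\|\alpha\|=|\alpha|$ about the presentation furnished by Lemma~\ref{L:Number_eqs}, which is the least number possible since the codimension is $|\alpha|$; granting this interpretation, everything after it is bookkeeping with partitions.
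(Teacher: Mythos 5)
Your argument is correct and arrives at the same reduction as the paper's proof — namely that, via Lemma~\ref{L:Number_eqs} and the Krull codimension bound, the corollary is the combinatorial statement $\|\alpha\|=|\alpha|$ — but the combinatorics are organized along a different route. The paper stays inside the poset $\binom{[n]}{\ell}$: it uses the graded structure to get $|\alpha|\le\|\alpha\|$, notes equality forces the complement of the down-set at $\alpha$ to be a chain (so every index is comparable to $\alpha$), handles $\ell\in\{1,n{-}1\}$ because the whole poset is then a chain, and for $1<\ell<n{-}1$ appeals to the two incomparable rank-two elements together with a propagation of the strict inequality $|\cdot|<\|\cdot\|$ down the order. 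You instead translate to partitions in the $\ell\times(n{-}\ell)$ rectangle, split $\|\alpha\|$ explicitly as $(\#\{\mu\subseteq\lambda\}-1)$ plus the count of incomparables, isolate the two conditions (a) and (b), and locate the obstruction concretely in $(2,1)\subseteq\lambda$ and the incomparability of $(2)$ and $(1,1)$ — the same two rank-two elements, but used to make each inequality a direct count rather than a propagation step. Your version is somewhat more self-contained (no propagation lemma, all bounds are explicit chain counts), at the cost of a bit more bookkeeping. Both proofs share the same mild interpretive point, which you flag and which the paper tacitly adopts as well: ``defined by $|\alpha|$ equations'' is being read as ``the presentation of Lemma~\ref{L:Number_eqs} attains the Krull lower bound,'' i.e.\ $\|\alpha\|=|\alpha|$, rather than as a claim about all possible generating sets.
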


\begin{proof}
By Lemma~\ref{L:Number_eqs}, we must describe when $|\alpha|=\|\alpha\|$.
First, observe that the poset $\binom{[n]}{\ell}$ is ranked.
This implies that $|\alpha|\leq\|\alpha\|$ 
and equality holds when the complement of the interval below
$\alpha$ in $\binom{[n]}{\ell}$ is a chain.
This also implies that every index $\beta\in\binom{[n]}{\ell}$ is comparable to $\alpha$.
When $\ell=1$ or $\ell=n{-}1$, the poset $\binom{[n]}{\ell}$ is a chain, so these
conditions hold for all $\alpha$.

Also note that if $|\alpha|<\|\alpha\|$ and $\alpha<\beta$, then we also have
$|\beta|<\|\beta\|$ as the poset $\binom{[n]}{\ell}$ is ranked.
When $1<\ell<n{-}1$, there are two indices $\alpha$ with $|\alpha|=2$, so that 
we have $|\alpha|<\|\alpha\|$ for all indices $\alpha$ with $|\alpha|>1$.
As there is a unique index $\alpha=(n{-}\ell,n{-}\ell+2,\dotsc,n)$ with $|\alpha|=1$,
for this we have  $1=|\alpha|=\|\alpha\|$.
\end{proof}

\begin{example}
 The set $\calM_{(7,8)}$ of matrices of the form $M=[X:I_2]$ for $X\in \Mat_{2\times 6}(\C)$
 parame\-trizes $X^\circ_{(7,8)}\Edot$, which is a dense subset of the
 Grassmannian $\Gr(2,8)$.
 In this cell, the Schubert variety $X_{(4,8)}\Fdot$ is defined by the vanishing of
 the $\binom{8}{6}=28$ minors of size $6\times 6$ of $\smMat{M}{\Phi_4}$ (when $i=1$), 
 and there are no equations for $i=2$.
 These minors are dependent.
 For $\Phi$ general, these 28 minors span a six-dimensional linear space of
 polynomials, by Lemma~\ref{L:Number_eqs}.
 Indeed, the set of sequences $\{\beta\in\binom{[8]}{2}\mid (4,8) \not\geq \beta\}$ 
 is 
\[
  \{(7,8)\,,\ (6,8)\,,\ 
    (5,8)\,,\ (6,7)\,,\  
    (5,7)\,,\ (5,6)\}\,. \eqno{\diamond}
\]
\end{example}

Let $(\alpha^1,\dotsc,\alpha^s)$ be a Schubert problem on $\Gr(\ell,n)$ and suppose
that $\Fdot^1,\dotsc,\Fdot^s$ are flags in general position,
giving an instance~\eqref{Eq:SchubertProblem} of this Schubert problem.
We use Propositions~\ref{P:minor_eqns} and~\ref{P:localCoordinates} to give three
formulations of this instance in terms of equations and local coordinates.
Let $\Phi^1,\dotsc,\Phi^s$ be full-rank matrices giving the flags, $\Fdot^i:=\Rdot(\Phi^i)$.

\subsubsection{Local coordinates for $\Gr(\ell,n)$.}\label{SSS:local_big}
 Let $\calM:=\calM_{(n-\ell+1,\dotsc,n)}$ be the set of matrices $(X\colon I_\ell)$ where
 $X\in\Mat_{\ell\times(n-\ell)}(\C)$, which parametrizes the dense Schubert cell
 $X_{(n{-}\ell{+}1,\dotsc,n)}^\circ\Edot$.
 Then the instance~\eqref{Eq:SchubertProblem} is defined in $\calM$ by the collection of
 ideals $I(\alpha^i,\Phi^i)$, for $i=1,\dotsc,s$.
 This uses $\ell(n{-}\ell)$ variables.

\subsubsection{Local coordinates for $X_{\alpha^s}\Fdot^s$.}\label{SSS:local_med}
 Assume that the coordinates for $\C^n$ have been chosen so that $\Fdot^s=\Edot$, the
 standard coordinate flag---this may be achieved by replacing each matrix $\Phi^i$ by
 $\Phi^i(\Phi^s)^{-1}$. 
 Let $\calM:=\calM_{\alpha^s}$ be the set of matrices parametrizing the Schubert cell
 $X^\circ_{\alpha^s}\Edot$.
 Then the instance~\eqref{Eq:SchubertProblem} is defined in $\calM$ by the collection of ideals
 $I(\alpha^i,\Phi^i)$, for $i=1,\dotsc,s{-}1$.
 This uses $\ell(n{-}\ell)-|\alpha^s|$ variables.

\subsubsection{Local coordinates for 
    $X_{\alpha^s}\Fdot^s\cap X_{\alpha^{s-1}}\Fdot^{s-1}$.}\label{SSS:local_small}
 Assume that the coordinates for $\C^n$ have been chosen so that $\Fdot^s=\Edot$ and 
 $\Fdot^{s-1}=\Epdot$ are the standard coordinate flags.
 This may be achieved by choosing $\be_i$ to be any nonzero vector
 in the one-dimensional linear subspace $F^s_i\cap F^{s-1}_{n+1-i}$ for each $i=1,\dotsc,n$.
 Then the instance~\eqref{Eq:SchubertProblem} is defined in
 $\calM=\calM_{\alpha^s}^{\alpha^{s-1}}$ by the collection of ideals $I(\alpha^i,\Phi^i)$,
 for $i=1,\dotsc,s{-}2$. 
 This uses $\ell(n{-}\ell)-|\alpha^s|-|\alpha^{s-1}|$ variables.
 (This requires that $\dim F^s_i\cap F^{s-1}_{n+1-i}=1$ for each $i$, which is equivalent to the two
   flags $\Fdot^s$ and $\Fdot^{s-1}$ being in linear general position.)

\begin{remark}
 In each of these formulations, if the flags $\Fdot^i$ are general then the set of
 solutions to the system of equations in $\calM$ will give the set of solutions to the
 Schubert problem, under the map $\calM\ni M\mapsto R_\ell(M)$. 
 Indeed, the parametrization map $\calM\to\Gr(\ell,n)$ is one-to-one on an open subset of
 $\calM$ and the image is dense in the corresponding Schubert variety or intersection of
 Schubert varieties. 
 By Kleiman's Transversality Theorem, there are no solutions in the lower-dimensional
 subvariety where the map is not injective.
\hfill{$\diamond$}
\end{remark}
\section{Primal-dual formulation of Schubert problems}\label{S:dual}

Associating a $\ell$-plane $H$ in $V=\C^n$ to its annihilator $H^\perp$ in the linear dual
\defcolor{$V^*$} of $V$ identifies the Grassmannian $\Gr(\ell,V)$ with $\Gr(n{-}\ell,V^*)$.
This identification sends Schubert varieties to Schubert varieties.
The graph of this map is defined by bilinear equations in Stiefel coordinates.
We explain how the parametrization of Schubert varieties and a twist on the classical reduction
to the diagonal leads to a formulation of any Schubert problem on $\Gr(\ell,V)$ as a square
system of bilinear equations.

\subsection{Parametrizing dual Schubert varieties}\label{SS:dual}

Let $\be_1,\dotsc,\be_n$ be a basis for the vector space $V$ and
\defcolor{$\be^*_1,\dotsc,\be^*_n$} be the dual basis for $V^*$.
Write \defcolor{$\perp$} for the canonical map $\perp\colon H\to H^\perp$ between 
$\Gr(\ell,V)$ and $\Gr(n{-}\ell,V^*)$.
We first identify the image ${\perp}(X_\alpha\Fdot)$ of a Schubert variety under the map
$\perp$. 

Each flag $\Fdot$ on $V$ has a corresponding \demph{dual flag} \defcolor{$\Fdot^\perp$} on
$V^*$, where $(\Fdot^\perp)_i=(F_{n-i})^\perp$, 
\[
   \Fdot^\perp\ \colon\ 
    (F_{n-1})^\perp\subsetneq  (F_{n-2})^\perp\subsetneq 
    \dotsb \subsetneq   (F_{1})^\perp\subsetneq V^*\,,
\]
For $\alpha\in\binom{[n]}{\ell}$, define the sequence
$\defcolor{\alpha^\perp}\in\binom{[n]}{n{-}\ell}$ by $j\in\alpha^\perp$ if and only if
$n{+}1{-}j\not\in\alpha$. 
For example, if $\ell=4$ and $n=7$, then $(2,4,5,7)^\perp=(2,5,7)$.
Note that 
\[
  [n]\ =\  \alpha^\perp\cup\{n+1-\alpha_i\mid i=1,\dotsc,\ell\}\,,
\]
and thus  $n{-}i=\#(\alpha\cap\{i{+}1,\dotsc,n\}) + \#(\alpha^\perp\cap[n{-}i])$.
(Here, $\#S$ is the cardinality of the set $S$, and 
$(\alpha\cap S):=(\alpha_i\mid\alpha_i\in S)$.)

\begin{lemma}\label{L:dualSchub}
 For a Schubert variety $X_\alpha\Fdot\subset\Gr(\ell,V)$, we have
 ${\perp}(X_\alpha\Fdot) = X_{\alpha^\perp}\Fdot^\perp$.
\end{lemma}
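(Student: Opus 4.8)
The plan is to reduce the statement to a single identity about the position function and then check that identity by a short linear-algebra computation. Since $\perp\colon\Gr(\ell,V)\to\Gr(n{-}\ell,V^*)$ is an isomorphism of varieties it is a homeomorphism, hence commutes with taking closures. As $X_\alpha\Fdot$ is the closure of the Schubert cell $X^\circ_\alpha\Fdot$ (and likewise $X_{\alpha^\perp}\Fdot^\perp=\overline{X^\circ_{\alpha^\perp}\Fdot^\perp}$ in $\Gr(n{-}\ell,V^*)$), it is enough to show that $\perp$ carries $X^\circ_\alpha\Fdot$ onto $X^\circ_{\alpha^\perp}\Fdot^\perp$. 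The cells $X^\circ_\gamma\Fdot$ are the fibres of the position map $H\mapsto\alpha(H,\Fdot)$, and $\gamma\mapsto\gamma^\perp$ is a bijection $\binom{[n]}{\ell}\to\binom{[n]}{n{-}\ell}$, so this follows once I prove
\begin{equation*}
  \alpha\bigl(H^\perp,\Fdot^\perp\bigr)\ =\ \alpha(H,\Fdot)^\perp
  \qquad\text{for every } H\in\Gr(\ell,V)\,.
\end{equation*}

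For the key computation I would use that $(H{+}F)^\perp=H^\perp\cap F^\perp$ for any subspace $F\subseteq V$, so that
\begin{equation*}
  \dim\bigl(H^\perp\cap F^\perp\bigr)\ =\ n-\dim(H{+}F)\ =\ n-\ell-\dim F+\dim(H\cap F)\,.
\end{equation*}
Writing $h(b):=\dim(H\cap F_b)$ and applying this with $F=F_b$, together with the identity $(F_b)^\perp=(\Fdot^\perp)_{n-b}$ that is immediate from the definition of $\Fdot^\perp$, I obtain
\begin{equation*}
  \dim\bigl(H^\perp\cap(\Fdot^\perp)_i\bigr)\ =\ i-\ell+h(n{-}i)\,,\qquad i=0,1,\dotsc,n\,.
\end{equation*}
The right-hand side is $0$ at $i=0$ and $n{-}\ell$ at $i=n$, which confirms that it is the step-function recording the position of $H^\perp$ with respect to $\Fdot^\perp$.

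It then remains to read off where this step-function jumps. Its increment from $i{-}1$ to $i$ equals $1-\bigl(h(n{-}i{+}1)-h(n{-}i)\bigr)$, which is $1$ exactly when $h$ does not jump at $n{+}1{-}i$, that is, when $n{+}1{-}i\notin\alpha(H,\Fdot)$, that is, when $i\in\alpha(H,\Fdot)^\perp$. Hence the set of indices where $H^\perp$ jumps against $\Fdot^\perp$ is precisely $\alpha(H,\Fdot)^\perp$, which is the asserted identity; the counting relation $n{-}i=\#(\alpha\cap\{i{+}1,\dotsc,n\})+\#(\alpha^\perp\cap[n{-}i])$ recorded just before the lemma is exactly the bookkeeping that makes the two sides consistent. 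I do not expect a genuine obstacle here; the only point requiring care is keeping track of the two reindexings that are built in — the shift $i\leftrightarrow n{-}b$ in the definition of the dual flag and the reflection $j\leftrightarrow n{+}1{-}j$ in the definition of $\alpha^\perp$ — and verifying that they combine so that the boundary values of the step-function come out to $0$ and $n{-}\ell$ as they must.
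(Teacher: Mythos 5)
Your argument is correct.  It rests on the same linear-algebraic core as the paper's proof — the identity $\dim(H^\perp\cap F^\perp)=n-\dim(\Span\{H,F\})$ combined with the bookkeeping relation $n-i = \#(\alpha\cap\{i{+}1,\dotsc,n\}) + \#(\alpha^\perp\cap[n{-}i])$ — but it is framed differently.  The paper argues directly on the closed Schubert varieties: it rewrites the defining inequalities as $\dim(H\cap F_i)\geq\#(\alpha\cap[i])$ for all $i$, then shows by the annihilator calculation that this chain of inequalities is equivalent, term by term, to the chain $\dim(H^\perp\cap F^\perp_{n-i})\geq\#(\alpha^\perp\cap[n{-}i])$, and then reads off the result from the definition \eqref{Eq:Schubert_Variety}.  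You instead prove the sharper \emph{cell-level} statement $\alpha(H^\perp,\Fdot^\perp)=\alpha(H,\Fdot)^\perp$ by computing the step function $i\mapsto\dim\bigl(H^\perp\cap(\Fdot^\perp)_i\bigr)=i-\ell+h(n{-}i)$ and locating its unit jumps, and then deduce the statement about varieties by noting that $\perp$ is a homeomorphism and so commutes with closure.  Both routes are essentially a single page; yours has the minor advantage that it isolates the cleaner invariant fact ``the position function dualizes,'' which also gives $\perp\colon X^\circ_\alpha\Fdot\xrightarrow{\sim}X^\circ_{\alpha^\perp}\Fdot^\perp$ for free and is the form one typically wants when moving on to the flag-manifold analogue; the paper's has the minor advantage of staying entirely with the inequality description in \eqref{Eq:Schubert_Variety}/\eqref{Eq:SchubVarAlt} and not invoking the closure relation $X_\alpha\Fdot=\overline{X^\circ_\alpha\Fdot}$.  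The jump analysis at the end of your proposal is correct: the increment from $i{-}1$ to $i$ is $1-(h(n{-}i{+}1)-h(n{-}i))$, which is $1$ exactly when $n{+}1{-}i\notin\alpha$, i.e.\ $i\in\alpha^\perp$, and your boundary checks at $i=0$ and $i=n$ confirm the normalization.
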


Note that $X_\alpha\Fdot= {\perp}(X_{\alpha^\perp}\Fdot^\perp)$.
We call $X_\alpha\Fdot$ and $X_{\alpha^\perp}\Fdot^\perp$ \demph{dual Schubert varieties}.

\begin{proof}
 Observe that if $\Fdot$ is a flag and $H$ a linear subspace, then $\dim H\cap F_{b}\geq
 a$ implies that $\dim H\cap F_{b+1}\geq a$.
 Thus the definition~\eqref{Eq:Schubert_Variety} of Schubert variety is equivalent to
 \begin{equation}\label{Eq:SchubVarAlt}
   \defcolor{X_\alpha\Fdot}\ :=\ 
   \{H\in \Gr(\ell,V)\ \mid\ 
     \dim(H\cap F_{i})\geq \#(\alpha \cap [i]),\mbox{ for  }i=1,\dotsc,n\}\,.
 \end{equation}
 For every $H\in\Gr(\ell,V)$ and all $i=1,\dotsc,n$, the following are equivalent.
 \begin{multline*}
   \dim H\cap F_i\ \geq\ \#(\alpha \cap [i])\\
  \Leftrightarrow\ 
   \makebox[330pt][l]{$\dim(\Span\{ H,F_i\})\ \leq\ \ell{+}i-\#(\alpha \cap [i])\ =\ 
    i + \#(\alpha \cap \{i{+}1,\dotsc,n\})$}\\
  \Leftrightarrow\ 
    \makebox[330pt][l]{$\dim(\Span\{ H,F_i\}^\perp)\ \geq
      n{-}i - \#(\alpha \cap \{i{+}1,\dotsc,n\})$} \\
  \Leftrightarrow\ 
    \makebox[372pt][l]{$\dim(H^\perp \cap F^\perp_{n-i})\ \geq
      n{-}i - \#(\alpha \cap \{i{+}1,\dotsc,n\})\,.$}
 \end{multline*}
 Since $n{-}i-\#(\alpha \cap \{i{+}1,\dotsc,n\})=\#(\alpha^\perp\cap[n{-}i])$, the
 lemma follows from~\eqref{Eq:Schubert_Variety}.
\end{proof}

\begin{example}
Observe that we have
\begin{equation}\label{Eq:dualCells}
    \left(\begin{array}{ccccccc} 
         \!m_{1,1}&\!1\!&  0   &\!0&0\!&0&\!0\!\\
         \!m_{2,1}&\!0\!&m_{2,3}&\!1&0\!&0&\!0\!\\
         \!m_{3,1}&\!0\!&m_{3,3}&\!0&1\!&0&\!0\!\\
         \!m_{4,1}&\!0\!&m_{4,3}&\!0&0\!&m_{4,6}&\!1\!
    \end{array}\right)
    \left(\!\begin{array}{ccc} 
         1&0&0\\
         -m_{1,1}&0&0\\
         0&1&0\\
         -m_{2,1}&-m_{2,3}&0\\
         -m_{3,1}&-m_{3,3}&0\\
          0&0&1\\
         -m_{4,1}&-m_{4,3}&-m_{4,6}
    \end{array}\!\right)
   \ =\ 
    \left(\!\begin{array}{ccc} 
     0&0&0\\ 0&0&0\\ 0&0&0\\ 0&0&0
    \end{array}\!\right)\ .
 \end{equation}
Thus if $H\in X_\alpha\Edot$ is the row space of the $4\times 7$ matrix  
$M\in\calM_{(2,4,5,7)}$ in~\eqref{Eq:dualCells}, then
$H^\perp\in X_{(2,4,5,7)^\perp}\Edot^\perp$ is the column space of the $7\times 3$ matrix
in~\eqref{Eq:dualCells} and vice-versa.
Here, the dual basis $\be^*_1,\dotsc,\be^*_n$ corresponds to the rows of the 
$7\times 3$ matrix.
In particular, $\calM^{(2,5,7)}$ parametrizes $X_{(2,4,5,7)^\perp}\Edot^\perp$ via the map
$M\mapsto C_{3}(M^T)$. 
Here, we write $M^T$ for the transpose of a matrix $M$.
\hfill{$\diamond$}
\end{example}

\begin{lemma}\label{L:dual_parametrization}
 The Schubert cell $X_{\alpha^\perp}\Edot^\perp$ is parametrized by matrices
 in $\calM^{\alpha^\perp}$ via $M\mapsto C_{n-\ell}(M^T)$.
 Similarly, the intersection 
 $X_{\alpha^\perp}\Edot^\perp\cap X_{\beta^\perp}{\Epdot}^\perp$ is parametrized by
 matrices in $\calM^{\alpha^\perp}_{\beta^\perp}$ via $M\mapsto C_{n-\ell}(M^T)$.
\end{lemma}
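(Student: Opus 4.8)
The plan is to deduce Lemma~\ref{L:dual_parametrization} from Lemma~\ref{L:dualSchub} together with the two parametrization results already in hand: the bijection $M\mapsto R_\ell(M)$ from $\calM^\beta$ onto $X^\circ_\beta\Epdot$ (and $\calM^\beta_\alpha$ onto a dense subset of $X_\alpha\Edot\cap X_\beta\Epdot$) established before Proposition~\ref{P:localCoordinates}, and the explicit ``annihilator'' matrix computation illustrated in~\eqref{Eq:dualCells}. The key observation is that $\perp$ sends the row space of an $\ell\times n$ matrix $M$ of full rank to the column space of any $n\times(n{-}\ell)$ matrix $N$ of full rank with $MN=0$; in local coordinates, when $M\in\calM^{\alpha^\perp}$ has a specific echelon shape, one can write down such an $N$ whose entries are (up to sign) the free entries of $M$, exactly as in~\eqref{Eq:dualCells}, so that $N=\psi(M)$ for a polynomial (in fact affine) map $\psi$.

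First I would make precise the correspondence between the echelon shape $\calM^{\alpha^\perp}$ and its ``dual'' shape. A matrix $M\in\calM^{\alpha^\perp}$ has, in row $j$, a leading $1$ in column $n{+}1{-}\alpha_{\ell{+}1{-}j}$ which is the unique nonzero entry of its column; the remaining entries lie in columns indexed by $\alpha^\perp$ (read appropriately), and these are free. One then checks that the assignment sending $M$ to the matrix $N$ obtained by placing a $1$ in each pivot position of $M^T$'s complementary columns and $-m_{i,j}$ in the off-pivot slots — the pattern on display in~\eqref{Eq:dualCells} — lands in the set of matrices parametrizing, via $C_{n-\ell}$, the Schubert cell dual to $\calM^{\alpha^\perp}$. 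I expect this to be the main obstacle: keeping the index bookkeeping straight between $\alpha$, $\alpha^\perp$, the $180^\circ$-rotation bijection $\calM^\beta\leftrightarrow\calM_\beta$, and the transpose, so that one verifies both that $MN=0$ identically and that $N$ sweeps out precisely the correct matrix shape as $M$ varies. This is a routine but genuinely fiddly linear-algebra verification; the $\Gr(3,9)$ and $\Gr(4,7)$ examples in the text make the general pattern transparent enough to write down.

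With that in place, the lemma follows quickly. Since $M\mapsto R_\ell(M)$ is a bijection from $\calM^{\alpha^\perp}$ onto $X^\circ_{\alpha^\perp}\Epdot$, and $MN=0$ with $N$ of full rank $n{-}\ell$ forces $C_{n-\ell}(N)=R_\ell(M)^\perp$, Lemma~\ref{L:dualSchub} gives $C_{n-\ell}(N)\in{\perp}(X^\circ_{\alpha^\perp}\Epdot)=X^\circ_{(\alpha^\perp)^\perp}{\Epdot}^\perp = X^\circ_{\alpha^\perp}\Edot^\perp$; here I would use the remark after Lemma~\ref{L:dualSchub} that $\perp$ is an involution on Schubert cells. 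Conversely, every $\ell$-plane $H^\perp\in X^\circ_{\alpha^\perp}\Edot^\perp$ has $H=(H^\perp)^\perp\in X^\circ_{\alpha^\perp}\Epdot$ of the form $R_\ell(M)$ for a unique $M\in\calM^{\alpha^\perp}$, and then $C_{n-\ell}(M^T)=H^\perp$ provided $\psi(M)$ and $M^T$ have the same column span — which holds because $\psi(M)$ is obtained from $M^T$ by elementary column operations, these being exactly the operations that clear the pivot columns. Hence $M\mapsto C_{n-\ell}(M^T)$ is a bijection $\calM^{\alpha^\perp}\to X^\circ_{\alpha^\perp}\Edot^\perp$.

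The second assertion is handled identically: the same matrix manipulation applied to $M\in\calM^{\alpha^\perp}_{\beta^\perp}$ produces an $n\times(n{-}\ell)$ matrix whose column span is $R_\ell(M)^\perp$, and since $M\mapsto R_\ell(M)$ parametrizes a dense subset of $X_{\beta^\perp}\Edot\cap X_{\alpha^\perp}\Epdot$ (by the discussion preceding Proposition~\ref{P:localCoordinates}, with the roles of the two shapes matched up), applying $\perp$ and Lemma~\ref{L:dualSchub} to each factor shows $M\mapsto C_{n-\ell}(M^T)$ parametrizes a dense subset of $X_{\alpha^\perp}\Edot^\perp\cap X_{\beta^\perp}{\Epdot}^\perp$. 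No new difficulty arises beyond again checking the index conventions line up, which is the same obstacle as before.
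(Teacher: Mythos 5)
The lemma's map $M\mapsto C_{n-\ell}(M^T)$ is not the annihilator map; it is just a change of viewpoint. A matrix $M\in\calM^{\alpha^\perp}$ is $(n{-}\ell)\times n$, and the column span of $M^T$ is the same linear subspace as the row span of $M$, re-interpreted in $V^*$ via the identification $\phi\colon V\to V^*$ sending $\be_i\mapsto\be^*_i$. Since $\phi(E'_i)=\Span\{\be^*_{n+1-i},\dotsc,\be^*_n\}=(\Edot^\perp)_i$ and $\phi(E_i)=\Span\{\be^*_1,\dotsc,\be^*_i\}=(\Epdot^\perp)_i$, the isomorphism $\phi$ carries $\Epdot\mapsto\Edot^\perp$, $\Edot\mapsto\Epdot^\perp$, and $R_{n-\ell}(M)\mapsto C_{n-\ell}(M^T)$. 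The lemma is then an immediate transport of the already-established parametrizations of $X^\circ_{\alpha^\perp}\Epdot$ by $\calM^{\alpha^\perp}$ and of $X_{\beta^\perp}\Edot\cap X_{\alpha^\perp}\Epdot$ by $\calM^{\alpha^\perp}_{\beta^\perp}$. Neither Lemma~\ref{L:dualSchub} nor the annihilator $\perp$ is needed.

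Your proposal instead routes the argument through $\perp$, and this introduces a genuine gap. First, there is a dimension slip throughout: you treat $M\in\calM^{\alpha^\perp}$ as $\ell\times n$ and write $R_\ell(M)$, but $\alpha^\perp\in\binom{[n]}{n-\ell}$ so $M$ has $n{-}\ell$ rows and $R_{n-\ell}(M)\in\Gr(n{-}\ell,V)$. Consequently $\perp$ would send $R_{n-\ell}(M)$ into $\Gr(\ell,V^*)$, not into $\Gr(n{-}\ell,V^*)$ where $X_{\alpha^\perp}\Edot^\perp$ lives; indeed $C_{n-\ell}(M^T)$ is $(n{-}\ell)$-dimensional and cannot equal the annihilator of an $(n{-}\ell)$-plane. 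Second, and independently, the chain ${\perp}(X^\circ_{\alpha^\perp}\Epdot)=X^\circ_{(\alpha^\perp)^\perp}\Epdot^\perp=X^\circ_{\alpha^\perp}\Edot^\perp$ fails at the last step: $(\alpha^\perp)^\perp=\alpha$, not $\alpha^\perp$, and $\Epdot^\perp$ is a different flag from $\Edot^\perp$ (they are opposite flags in $V^*$). The matrix $N$ with $MN=0$ that appears in~\eqref{Eq:dualCells} is indeed relevant later, when the bilinear equations $MN=0$ for the diagonal are set up, but it plays no role in this lemma: here $M^T$ carries the \emph{same} subspace as $M$, just read as a subspace of $V^*$, not its orthogonal complement.
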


We leave the proof of this lemma to the reader.

Note that if $H\in \calM_\alpha$ and $K\in\calM^{\alpha^\perp}$, then each of the equations 
$HK^T=0_{\ell\times(n-\ell)}$ is either trivial or of the form $m_{i,j}+k_{t,j}=0$ as
in~\eqref{Eq:dualCells}.
Define \defcolor{$\calN_\alpha$} to be the set of $n\times (n{-}\ell)$ matrices
$\{M^T\mid M\in\calM^{\alpha^\perp}\}$.
Then $\calN_\alpha$  parametrizes $X_{\alpha^\perp}\Edot^\perp$ via $N\mapsto C_{n-\ell}(N)$.
As $\Phi\Phi^{-1}=I$, if $\Fdot=\Rdot(\Phi)$ then $(\Fdot)^\perp$ is the flag whose $i$
plane is the span of the last $i$ columns of $\Phi^{-1}$.
Thus $\calN_\alpha$ parametrizes $X_{\alpha^\perp}\Edot^\perp$ via 
$N\mapsto C_{n-\ell}(\Phi^{-1}N)$.

\subsection{The primal-dual formulation of a Schubert problem}\label{SS:DualSchubProb}

Let $\defcolor{\Delta}\colon\Gr(\ell,V)\rightarrow\Gr(\ell,V)\times\Gr(n{-}\ell,V^*)$ be the 
graph of the isomorphism $\perp\colon\Gr(\ell,V)\to\Gr(n{-}\ell,V^*)$.
We call $\Delta$ the \demph{dual diagonal} map and will identify $\Gr(\ell,V)$ with its image
under $\Delta$. 
In this context, the classical reduction to the diagonal becomes the following.

\begin{lemma}\label{L:diagonal}
 Let $A,B\subset\Gr(\ell,V)$.  
 Then $\Delta(A\cap B)= (A\times{\perp}(B))\cap\Delta(\Gr(\ell,V))$.
 When $A$ and $B$ are subschemes, this is a scheme-theoretic equality.
\end{lemma}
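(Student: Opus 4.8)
The plan is to verify the claimed set equality on points first and then note that the argument is insensitive enough to the scheme structure to give the scheme-theoretic statement. For the set-theoretic equality, observe that $\Delta$ is injective (it is a graph), so $\Delta(A\cap B)$ consists of exactly the pairs $(H,H^\perp)$ with $H\in A$ and $H\in B$. The right-hand side $(A\times{\perp}(B))\cap\Delta(\Gr(\ell,V))$ consists of the pairs $(H,K)$ lying on the graph, i.e.\ $K=H^\perp$, subject to $H\in A$ and $K\in{\perp}(B)$. Since $K=H^\perp$, the condition $K\in{\perp}(B)$ is equivalent to $H^\perp\in{\perp}(B)$, which because $\perp$ is a bijection is equivalent to $H\in B$. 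Hence both sides are the set of pairs $(H,H^\perp)$ with $H\in A\cap B$, proving the point-level equality.

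For the scheme-theoretic statement, I would argue that the isomorphism $\perp\colon\Gr(\ell,V)\to\Gr(n{-}\ell,V^*)$ induces an isomorphism of the ambient product with the graph, under which intersecting with $\Delta(\Gr(\ell,V))$ corresponds to pulling back. Concretely, $\Delta$ identifies $\Gr(\ell,V)$ with the closed subscheme $\Delta(\Gr(\ell,V))\subset\Gr(\ell,V)\times\Gr(n{-}\ell,V^*)$, and under this identification the first projection $p_1$ restricts to an isomorphism $\Delta(\Gr(\ell,V))\xrightarrow{\sim}\Gr(\ell,V)$ while the second projection restricts to $\perp$. Scheme-theoretic intersection commutes with pullback along a morphism, so
\[
 \bigl(A\times{\perp}(B)\bigr)\cap\Delta(\Gr(\ell,V))
 \;=\;
 p_1^{-1}(A)\big|_{\Delta(\Gr)}\;\cap\;p_2^{-1}({\perp}(B))\big|_{\Delta(\Gr)}\,.
\]
Transporting back along $\Delta$, the first term becomes $A$ and the second becomes ${\perp}^{-1}({\perp}(B))=B$ as subschemes of $\Gr(\ell,V)$, since $\perp$ is an isomorphism and hence ${\perp}^{-1}({\perp}(B))=B$ scheme-theoretically. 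Therefore $\Delta^{-1}$ of the right-hand side equals $A\cap B$ as schemes, and applying $\Delta$ (an isomorphism onto its image) gives the claim.

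The only genuinely careful point is the scheme-theoretic bookkeeping: one must use that $\perp$ is an isomorphism (not merely a bijection on points) so that preimages of subschemes behave correctly, and that $\Delta$ is a closed immersion so that restricting the projections to $\Delta(\Gr(\ell,V))$ yields honest isomorphisms. Once those facts are in hand—and they follow immediately from the fact that $\perp$ is induced by the linear isomorphism $H\mapsto H^\perp$, which is algebraic and has an algebraic inverse—the computation is a formal manipulation with fiber products. I expect this step to be the main obstacle only in the sense of requiring precision; there is no deep content, and in fact the paper may well simply assert the scheme-theoretic version as the standard reduction to the diagonal, since $A\times{\perp}(B)$ and $\Delta(\Gr(\ell,V))$ meet inside the product exactly as $A$ and $B$ meet inside $\Gr(\ell,V)$.
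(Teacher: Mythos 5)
Your proof is correct and matches the paper's approach: the set-theoretic equality is verified by the same elementwise manipulation, and the scheme-theoretic equality is established in both cases by pulling back $A\times{\perp}(B)$ along $\Delta$ and observing that the result is $A\cap B$. The only cosmetic difference is that the paper phrases the pullback in terms of ideals (the pullback ideal is generated by $I_A$ and $I_B$), while you frame it via fiber products and the restrictions of the two projections to $\Delta(\Gr(\ell,V))$; these are equivalent formulations of the same reasoning.
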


\begin{proof}
 Let $H\in A\cap B$.
 Then $\Delta(H)=(H,H^\perp)\in A\times{\perp}(B)$, which establishes the containment
 $\subset$.
 For the other containment, let $(H,K)\in  (A\times{\perp}(B))\cap \Delta(\Gr(\ell,V))$.
 Then $H\in A$ and $H^\perp=K\in{\perp}(B)$, so that $H\in B$, which completes the proof 
 of the equality as sets. 
 To see that this equality is scheme-theoretic, observe that pulling back the ideal of 
 $A\times{\perp}(B)$ along the map $\Delta$ gives the ideal generated by the ideals of $A$ and of $B$, which
 is the ideal of the scheme-theoretic intersection, $A\cap B$.
\end{proof}

We use Lemma~\ref{L:diagonal} to express a Schubert
problem as a complete intersection given by bilinear equations.
Suppose that $M$ is a $\ell\times n$ matrix whose row space $H$ is a $\ell$-plane in
$V$ and $N$ is a $n\times(n{-}\ell)$ matrix whose column space $K$ is a $(n{-}\ell)$-plane
in $V^*$.
(The coordinates of the matrices---columns for $M$ and rows for $N$---are with respect to 
the bases $\be_i$ and $\be^*_j$, respectively.)
Then $H^\perp=K$ if and only if $MN=0_{\ell\times(n{-}\ell)}$, giving $\ell(n{-}\ell)$ bilinear 
equations in the entries of $M$ and $N$ for $\Delta(\Gr(\ell,V))$.
We record this fact.

\begin{lemma}\label{L:bilinear}
 Let $A,B$ be two subsets of\/ $\Gr(\ell,V)$ and suppose that $\calM$ is a set of 
 $\ell\times n$ matrices parametrizing $A$ (via row span) and that $\calN$ is a set of   
 $n\times(n{-}\ell)$ matrices parametrizing ${\perp}(B)$ (via column span).
 Then the subset of $\calM\times\calN$ defined by the equations $MN=0_{\ell\times(n-\ell)}$
 parametrizes $\Delta(A\cap B)$ as a subscheme of $A\times{\perp}(B)$.
\end{lemma}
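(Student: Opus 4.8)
The plan is to deduce this from the scheme-theoretic reduction to the diagonal in Lemma~\ref{L:diagonal}, together with the fact recorded just above that the $\ell(n{-}\ell)$ entries of $MN$ are the bilinear equations cutting out the dual diagonal $\Delta(\Gr(\ell,V))$ in Stiefel coordinates on $\Gr(\ell,V)\times\Gr(n{-}\ell,V^*)$. First I would assemble the product parametrization: the maps $\calM\to A$, $M\mapsto R_\ell(M)$, and $\calN\to{\perp}(B)$, $N\mapsto C_{n-\ell}(N)$, together give a parametrization $\calM\times\calN\to A\times{\perp}(B)$, and this map is a chart for the ambient $\Gr(\ell,V)\times\Gr(n{-}\ell,V^*)$ compatible with scheme structure. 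It then suffices to identify, inside $\calM\times\calN$, the pullback of the subscheme $\Delta(\Gr(\ell,V))$.

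I would do this first on points. For $(M,N)\in\calM\times\calN$, set $H:=R_\ell(M)$ and $K:=C_{n-\ell}(N)$. Since the columns of $M$ correspond to the basis $\be_i$ and the rows of $N$ to the dual basis $\be^*_j$, the product $MN$ records the pairing of the rows of $M$ against the columns of $N$, so $MN=0_{\ell\times(n-\ell)}$ is equivalent to $K\subseteq H^\perp$. As $M$ has rank $\ell$ and $N$ has rank $n{-}\ell$, both $K$ and $H^\perp$ have dimension $n{-}\ell$, so this says exactly $K=H^\perp$, i.e.\ $(H,K)\in\Delta(\Gr(\ell,V))$. Hence the locus $\{MN=0\}$ in $\calM\times\calN$ maps onto $(A\times{\perp}(B))\cap\Delta(\Gr(\ell,V))$, which by Lemma~\ref{L:diagonal} equals $\Delta(A\cap B)$; conversely each $(H,H^\perp)\in\Delta(A\cap B)$ is hit by choosing $M\in\calM$ with $R_\ell(M)=H$ and $N\in\calN$ with $C_{n-\ell}(N)=H^\perp$, which exist because $\calM$ parametrizes $A\ni H$ and $\calN$ parametrizes ${\perp}(B)\ni H^\perp$. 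For the scheme-theoretic refinement I would then observe that the entries of $MN$, as regular functions on $\calM\times\calN$, are exactly the pullbacks of the bilinear equations defining $\Delta(\Gr(\ell,V))$; therefore the subscheme they define is the transport through the parametrization of the scheme-theoretic intersection $(A\times{\perp}(B))\cap\Delta(\Gr(\ell,V))$, and Lemma~\ref{L:diagonal} identifies this with $\Delta(A\cap B)$ as a subscheme of $A\times{\perp}(B)$.

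The step I expect to need the most care is the scheme-theoretic bookkeeping: one must be certain that the ideal generated by the $\ell(n{-}\ell)$ entries of $MN$ is the honest ideal of $\Delta(\Gr(\ell,V))$ pulled back to $\calM\times\calN$, not merely its radical, so that no spurious embedded or non-reduced structure is introduced by writing down these specific equations. This is precisely the content of the sentence preceding the lemma asserting that $MN=0_{\ell\times(n-\ell)}$ gives equations "for $\Delta(\Gr(\ell,V))$", and once that is granted the remainder of the argument is a formal pullback of the scheme-theoretic equality in Lemma~\ref{L:diagonal}.
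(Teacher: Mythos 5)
Your proposal is correct and takes essentially the same approach as the paper: the paper gives no separate proof but ``records'' the lemma as an immediate consequence of the discussion immediately preceding it, which supplies exactly your two ingredients, namely Lemma~\ref{L:diagonal} and the equivalence $H^\perp = K \Leftrightarrow MN = 0_{\ell\times(n-\ell)}$. You are in fact slightly more careful than the paper in noting that $MN=0$ a priori gives only $K\subseteq H^\perp$ and that a rank count is needed to upgrade this to equality, and your flagged worry about the scheme-theoretic identification is precisely the content of the paper's sentence ``giving $\ell(n{-}\ell)$ bilinear equations \dots for $\Delta(\Gr(\ell,V))$,'' which the paper asserts with no more justification than you give.
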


We deduce the primal-dual formulation of the Schubert variety $X_\alpha\Fdot$.

\begin{corollary}\label{C:Primal_dual_Schub_Var}
 Suppose that $\Fdot=\Rdot(\Phi)$ is a general flag. Then the bilinear equations $M\Phi^{-1}N=0_{k\times(n-\ell)}$ on pairs 
 $(M,N)\in  \calM_{(n-\ell+1,\dotsc,n)}\times\calN_\alpha$ define 
 $\Delta(X^\circ_{(n-\ell+1,\dotsc,n)}\Edot\cap X^\circ_\alpha\Fdot)$ as a system of
 $\ell(n{-}\ell)$ equations in $2\ell(n{-}\ell)-|\alpha|$ variables.
\end{corollary}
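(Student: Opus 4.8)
The plan is to deduce this from Lemma~\ref{L:bilinear} after twisting the dual factor by $\Phi^{-1}$, and then to count. For the numerology: the product $M\Phi^{-1}N$ has shape $(\ell\times n)\cdot(n\times n)\cdot(n\times(n{-}\ell))=\ell\times(n{-}\ell)$, so its vanishing imposes exactly $\ell(n{-}\ell)$ bilinear equations. The factor $\calM_{(n-\ell+1,\dotsc,n)}$ is the set of matrices $(X\colon I_\ell)$ with $X\in\Mat_{\ell\times(n-\ell)}(\C)$, hence an affine space of dimension $\ell(n{-}\ell)$. The factor $\calN_\alpha=\{M^T\mid M\in\calM^{\alpha^\perp}\}$ is isomorphic to $\calM^{\alpha^\perp}$, which by a $180^\circ$ rotation is isomorphic to $\calM_{\alpha^\perp}\cong\C^{\dim(\alpha^\perp)}$; since $\perp$ is an isomorphism of Grassmannians carrying $X_\alpha\Edot$ onto $X_{\alpha^\perp}\Edot^\perp$ by Lemma~\ref{L:dualSchub}, we get $\dim(\alpha^\perp)=\dim(\alpha)=\ell(n{-}\ell){-}|\alpha|$. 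Summing, $\calM_{(n-\ell+1,\dotsc,n)}\times\calN_\alpha$ has dimension $2\ell(n{-}\ell){-}|\alpha|$.

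For the substance, I would introduce the affine space $\calN:=\{\Phi^{-1}N\mid N\in\calN_\alpha\}$, isomorphic to $\calN_\alpha$ via the linear map $N\mapsto\Phi^{-1}N$. By Lemma~\ref{L:dual_parametrization} the map $N\mapsto C_{n-\ell}(N)$ identifies $\calN_\alpha$ with the Schubert cell $X^\circ_{\alpha^\perp}\Edot^\perp$. Left multiplication by $\Phi^{-1}$ on column vectors commutes with taking column span and carries the flag $\Edot^\perp$ to the flag whose $i$-plane is the span of the last $i$ columns of $\Phi^{-1}$, which is $\Fdot^\perp$; since applying a fixed invertible matrix to a subspace and to a flag preserves their relative position, $N'\mapsto C_{n-\ell}(N')$ identifies $\calN$ with $X^\circ_{\alpha^\perp}\Fdot^\perp$, and by Lemma~\ref{L:dualSchub} this cell equals ${\perp}(X^\circ_\alpha\Fdot)$. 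Now apply Lemma~\ref{L:bilinear} with $A=X^\circ_{(n-\ell+1,\dotsc,n)}\Edot$ parametrized via row span by $\calM_{(n-\ell+1,\dotsc,n)}$, with $B=X^\circ_\alpha\Fdot$, and with $\calN$ parametrizing ${\perp}(B)$ via column span: the equations $MN'=0_{\ell\times(n-\ell)}$ cut out $\Delta(A\cap B)$ as a subscheme of $A\times{\perp}(B)$. Pulling back along the isomorphism $(M,N)\mapsto(M,\Phi^{-1}N)$ rewrites this as $M\Phi^{-1}N=0_{\ell\times(n-\ell)}$ on $\calM_{(n-\ell+1,\dotsc,n)}\times\calN_\alpha$, and $A\cap B$ is exactly $X^\circ_{(n-\ell+1,\dotsc,n)}\Edot\cap X^\circ_\alpha\Fdot$, which is the assertion.

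The part needing care is purely the bookkeeping in the duality: one should check that the annihilator of the row space of $M$ is the nullspace $\{v\mid Mv=0\}$, that the full rank of $N\in\calN_\alpha$ (hence of $\Phi^{-1}N$) forces $\dim C_{n-\ell}(\Phi^{-1}N)=n{-}\ell$, so that $M\Phi^{-1}N=0$ already yields the equality $C_{n-\ell}(\Phi^{-1}N)=R_\ell(M)^\perp$ rather than mere containment, and that $\Phi^{-1}$ acting on $V^*$ matches $\Edot^\perp$ to $\Fdot^\perp$ with the indices lining up correctly. None of this is deep---it is precisely the content isolated in Lemmas~\ref{L:dualSchub}, \ref{L:dual_parametrization}, and~\ref{L:bilinear}---and genericity of $\Fdot$ plays no role in the scheme-theoretic identification; it is carried in the hypothesis only because the corollary is later applied to general flags.
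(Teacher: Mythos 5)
Your proof is correct and follows essentially the same route as the paper: the paper simply remarks that Corollary~\ref{C:Primal_dual_Schub_Var} is the local version of Lemma~\ref{L:diagonal}, i.e.\ Lemma~\ref{L:bilinear} applied with the dual factor twisted by $\Phi^{-1}$, together with the dimension count $\dim\calN_\alpha=\dim(\alpha^\perp)=\ell(n{-}\ell)-|\alpha|$. Your more explicit unwinding (identifying $\calN=\Phi^{-1}\calN_\alpha$ with $X^\circ_{\alpha^\perp}\Fdot^\perp$, noting full rank gives equality rather than containment, and observing that genericity enters only to make the intersection nonempty) fills in exactly what the paper leaves implicit.
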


These bilinear equations define the scheme-theoretic intersection; this is just the local version of Lemma~\ref{L:diagonal}.

 We require $\Fdot$ to be sufficiently general so that
 $X^\circ_{(n-\ell+1,\dotsc,n)}\Edot\cap X^\circ_\alpha\Fdot$ is nonempty,
 for then it is a dense subset of $X_\alpha\Fdot$ (as $X^\circ_{(n-\ell+1,\dotsc,n)}\Edot$ is
 a dense subset of $\Gr(\ell,V)$).
 Since $\dim\Delta(X^\circ_{(n-\ell+1,\dotsc,n)}\Edot\cap X^\circ_\alpha\Fdot)=
        \dim(X_\alpha\Fdot)=\ell(n{-}\ell)-|\alpha|$, this system of equations and variables
 exhibits $X_\alpha\Fdot$ as a complete intersection of bilinear equations in local
 coordinates which we call the \demph{primal-dual formulation} of the Schubert variety  
 $X_\alpha\Fdot$.   

\subsection{Primal-dual formulation of a Schubert problem}
Extending this primal-dual formulation to Schubert problems uses a dual diagonal map 
to the small diagonal in a larger product of Grassmannians. 
Define
 \[
   \Delta^s\ \colon\ 
   \Gr(\ell,V)\rightarrow\Gr(\ell,V)\times \bigl(\Gr(n-\ell,V^*)\bigr)^{s-1}\,,
 \]
by sending $H\mapsto(H,H^\perp,\dotsc,H^\perp)$.

\begin{lemma}\label{L:ellDiagonal}
 Let $A_1,\dotsc,A_{s}\subset\Gr(\ell,V)$.
 Then we have the scheme-theoretic equality
\[
   \Delta^s(A_1\cap \cdots \cap A_s)\ =\ 
   (A_1\times {\perp}(A_2)\times \cdots \times {\perp}(A_s))
    \bigcap \Delta^s(\Gr(\ell,V))\,.
\]
\end{lemma}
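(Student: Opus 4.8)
The plan is to reduce Lemma~\ref{L:ellDiagonal} to the two-fold case already handled in Lemma~\ref{L:diagonal}. The key observation is that the small-diagonal map $\Delta^s$ factors through the ordinary diagonal: writing $\delta\colon\Gr(n{-}\ell,V^*)\to (\Gr(n{-}\ell,V^*))^{s-1}$ for the $(s{-}1)$-fold diagonal and $\Delta$ for the dual diagonal of Lemma~\ref{L:diagonal}, we have $\Delta^s = (\mathrm{id}\times\delta)\circ\Delta$, since $H\mapsto(H,H^\perp)\mapsto(H,H^\perp,\dotsc,H^\perp)$. So I would first establish the set-theoretic equality by a direct chase exactly as in the proof of Lemma~\ref{L:diagonal}: given $H\in A_1\cap\dotsb\cap A_s$, its image $\Delta^s(H)=(H,H^\perp,\dotsc,H^\perp)$ lies in $A_1\times{\perp}(A_2)\times\dotsb\times{\perp}(A_s)$ because $H\in A_i$ forces $H^\perp\in{\perp}(A_i)$; conversely, a point of the right-hand side has the form $\Delta^s(H)=(H,H^\perp,\dotsc,H^\perp)$ with $H\in A_1$ and $H^\perp\in{\perp}(A_i)$ for $i=2,\dotsc,s$, and the latter is equivalent to $H\in A_i$ by applying $\perp$ (an isomorphism) and using ${\perp}({\perp}(A_i))=A_i$.

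For the scheme-theoretic refinement I would argue as in Lemma~\ref{L:diagonal}: pulling back the ideal of the product $A_1\times{\perp}(A_2)\times\dotsb\times{\perp}(A_s)$ along $\Delta^s$ gives the ideal generated by the ideal of $A_1$ together with the ideals of ${\perp}(A_2),\dotsc,{\perp}(A_s)$ pulled back along $\perp$; since $\perp$ is an isomorphism, these pullbacks are exactly the ideals of $A_2,\dotsc,A_s$. Hence the pullback ideal is the ideal generated by the ideals of $A_1,\dotsc,A_s$, which defines the scheme-theoretic intersection $A_1\cap\dotsb\cap A_s$. Because $\Delta^s$ is a closed immersion onto $\Delta^s(\Gr(\ell,V))$, cutting $A_1\times{\perp}(A_2)\times\dotsb\times{\perp}(A_s)$ by $\Delta^s(\Gr(\ell,V))$ produces precisely the image $\Delta^s$ of this scheme, which is the claimed equality.

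I do not expect a serious obstacle here; the content is entirely formal once Lemma~\ref{L:diagonal} is in hand. The only mild subtlety worth stating carefully is the scheme-theoretic bookkeeping --- that pullback along a closed immersion of a product of ideals is the sum of the pullbacks of the factors, and that pullback along the isomorphism $\perp$ identifies the ideal of ${\perp}(A_i)$ with the ideal of $A_i$ --- but this is routine and identical in spirit to the corresponding step in the proof of Lemma~\ref{L:diagonal}. Alternatively, and perhaps most cleanly for the writeup, one can induct on $s$: the case $s=2$ is Lemma~\ref{L:diagonal}, and the inductive step applies Lemma~\ref{L:diagonal} to $A=A_1\cap\dotsb\cap A_{s-1}$ and $B=A_s$ after noting that the extra diagonal copies of $V^*$ are cut out compatibly; this avoids any explicit ideal manipulation by reusing the two-fold statement as a black box.
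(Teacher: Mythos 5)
The paper gives no proof of this lemma, treating it as an immediate extension of Lemma~\ref{L:diagonal}. Your argument is correct and is precisely the intended one: both the set-theoretic chase and the ideal-pullback step generalize, factor by factor, the paper's proof of Lemma~\ref{L:diagonal}, so there is nothing substantive to add (the factorization $\Delta^s=(\mathrm{id}\times\delta)\circ\Delta$ you mention is a nice observation but is not actually used in either argument).
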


Lemma ~\ref{L:bilinear} extends to the dual diagonal of many factors.

\begin{lemma}\label{L:ellBilinear}
 Let $A_1,\dotsc,A_{s}\subset\Gr(\ell,V)$, suppose that $\calM$ is a set of 
 $\ell\times n$ matrices parametrizing $A_1$ and $\calN_i$ is set of 
 $n\times (n{-}\ell)$ matrices parametrizing ${\perp}(A_i)$, for each $i=2,\dotsc,s$.
 Then $\Delta^s(A_1\cap \cdots \cap A_s)$ is the subscheme of 
 $A_1\times {\perp}(A_2) \times \cdots \times {\perp}(A_s)$ defined in the 
 parametrization $\calM\times\calN_2\times\cdots\times\calN_s$ by the equations
 $MN_i=0_{\ell\times(n{-}\ell)}$ for $i=2,\dotsc,s$, where 
 $(M,N_2,\dotsc,N_s)\in\calM\times\calN_2\times\cdots\times\calN_s$. 
\end{lemma}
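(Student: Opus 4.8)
The plan is to run the proof of Lemma~\ref{L:bilinear} one factor at a time, using the $s$-fold reduction to the diagonal of Lemma~\ref{L:ellDiagonal} in place of Lemma~\ref{L:diagonal}. By Lemma~\ref{L:ellDiagonal},
\[
  \Delta^s(A_1\cap \cdots \cap A_s)\ =\
  \bigl(A_1\times{\perp}(A_2)\times \cdots \times{\perp}(A_s)\bigr)
   \ \bigcap\ \Delta^s(\Gr(\ell,V))
\]
as schemes inside $\Gr(\ell,V)\times\Gr(n{-}\ell,V^*)^{s-1}$. Since $\calM\times\calN_2\times\cdots\times\calN_s$ parametrizes the product $A_1\times{\perp}(A_2)\times\cdots\times{\perp}(A_s)$, it suffices to pull back the ideal of the small dual diagonal $\Delta^s(\Gr(\ell,V))$ along this parametrization and to identify it with the ideal generated by the entries of the $\ell\times(n{-}\ell)$ matrices $MN_2,\dotsc,MN_s$.

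For the underlying sets: a point of $\calM\times\calN_2\times\cdots\times\calN_s$ parametrizes the tuple $(H,K_2,\dotsc,K_s)$ with $H=R_\ell(M)$ and $K_i=C_{n-\ell}(N_i)$, and this tuple lies on $\Delta^s(\Gr(\ell,V))$ precisely when $K_i=H^\perp$ for every $i=2,\dotsc,s$. By the computation recorded just before Lemma~\ref{L:bilinear}, $H^\perp=K_i$ is exactly the vanishing $MN_i=0_{\ell\times(n-\ell)}$, so set-theoretically the preimage of $\Delta^s(\Gr(\ell,V))$ is the common zero locus of the $s-1$ blocks of bilinear equations $MN_i=0$.

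For the scheme structure I would write $\Delta^s(\Gr(\ell,V))$ as the scheme-theoretic intersection of the $s-1$ pairwise dual diagonals $D_i:=\{(H,K_2,\dotsc,K_s)\mid H^\perp=K_i\}$, $i=2,\dotsc,s$. Each $D_i$ is the preimage of the dual diagonal $\Delta(\Gr(\ell,V))\subset\Gr(\ell,V)\times\Gr(n{-}\ell,V^*)$ under the projection onto the first and $i$-th factors, hence a closed subscheme, and the universal property of the product, together with the fact that the dual diagonal is a section of a projection and so a closed immersion, forces $\bigcap_{i=2}^s D_i$ to coincide as a scheme with $\Delta^s(\Gr(\ell,V))$. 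By Lemma~\ref{L:bilinear} applied in the first and $i$-th factors (with the two subsets taken to be $A_1$ and $A_i$), the preimage of $D_i$ in $\calM\times\calN_2\times\cdots\times\calN_s$ is the subscheme cut out by the single block $MN_i=0_{\ell\times(n-\ell)}$. Summing the corresponding ideals over $i=2,\dotsc,s$ and invoking the displayed scheme-theoretic reduction then gives the lemma.

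The whole argument is a formal elaboration of Lemmas~\ref{L:diagonal} and~\ref{L:bilinear}, so there is no deep obstacle; the step requiring the most care is the scheme-theoretic bookkeeping in the third paragraph---verifying that the ideal of $\Delta^s(\Gr(\ell,V))$ pulls back to the \emph{sum} of the bilinear ideals generated by the entries of the $MN_i$, i.e.\ that the pairwise dual diagonals $D_i$ meet cleanly, and that restricting further from the ambient product to $A_1\times{\perp}(A_2)\times\cdots\times{\perp}(A_s)$ introduces no extra relations.
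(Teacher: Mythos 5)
The paper omits a proof, asserting only that Lemma~\ref{L:bilinear} ``extends to the dual diagonal of many factors''; your argument is the correct elaboration of that assertion, proceeding along the intended route by applying Lemma~\ref{L:ellDiagonal} in place of Lemma~\ref{L:diagonal} and identifying each block $MN_i=0$ with the pullback of a pairwise dual diagonal $D_i$. The scheme-theoretic worry you flag at the end resolves cleanly: in Stiefel coordinates the constraint $MN_i=0$ expresses the free entries of $N_i$ as affine-linear functions of the entries of $M$, so each $D_i$ is the graph of a morphism (a section of the projection to the first factor), their scheme-theoretic intersection is again such a graph, namely $\Delta^s(\Gr(\ell,V))$, and its ideal is therefore the sum of the ideals $(MN_i)$ with no further relations introduced by restricting to $A_1\times{\perp}(A_2)\times\cdots\times{\perp}(A_s)$.
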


We deduce our main theorem of this section.

\begin{theorem}\label{th:main}
 Let $(\alpha^1,\dotsc,\alpha^s)$ be a Schubert problem for $\Gr(\ell,V)$ and suppose that 
 $\Fdot^1,\dotsc,\Fdot^s$ are general flags given by matrices $\Phi^1,\dotsc,\Phi^s$.
 Then the system of equations
 \begin{equation}\label{Eq:PDF-I}
   (M\Phi^s)(\Phi^i)^{-1}N_i\ =\ 0_{\ell\times(n{-}\ell)}
   \qquad\mbox{for}\ i=1,\dotsc,s{-}1\,,
 \end{equation}
 where 
 $(M,N_1,\dotsc,N_{s-1})\in \calM_{\alpha^s}\times \calN_{\alpha^1}\times\dotsb\times \calN_{\alpha^{s-1}}$ 
 defines the instance of the Schubert problem~\eqref{Eq:SchubertProblem} as the common zeroes of 
 $(s{-}1)\cdot \ell(n{-}\ell)$ bilinear equations in  
\[
   \sum_{i=1}^s \bigl(\ell(n{-}\ell)-|\alpha^i|\bigr)
   \ =\ (s{-}1)\cdot \ell(n{-}\ell)
\]
 variables, exhibiting the Schubert problem as a square system of equations.
\end{theorem}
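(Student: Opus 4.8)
The plan is to assemble the theorem from the pieces already developed, with the main work being to reconcile the general-flag formulation with the specific parametrizations $\calM_{\alpha^s}$ and $\calN_{\alpha^i}$ (which were built relative to the coordinate flags $\Edot$ and $\Edot^\perp$). First I would recall that, by Proposition~\ref{P:localCoordinates}, the map $\calM_{\alpha^s}\ni M\mapsto R_\ell(M\Phi^s)$ parametrizes a dense subset of the Schubert cell $X^\circ_{\alpha^s}\Fdot^s$, and hence (the Schubert cell being dense in the Schubert variety) a dense subset of $X_{\alpha^s}\Fdot^s$. Dually, by Lemma~\ref{L:dual_parametrization} together with the discussion introducing $\calN_\alpha$, the map $\calN_{\alpha^i}\ni N_i\mapsto C_{n-\ell}\bigl((\Phi^i)^{-1}N_i\bigr)$ parametrizes a dense subset of $X_{(\alpha^i)^\perp}(\Fdot^i)^\perp = {\perp}(X_{\alpha^i}\Fdot^i)$, using Lemma~\ref{L:dualSchub}. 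So $\calM_{\alpha^s}$ parametrizes $A_s:=X_{\alpha^s}\Fdot^s$ and each $\calN_{\alpha^i}$ parametrizes ${\perp}(A_i)$ where $A_i:=X_{\alpha^i}\Fdot^i$, for $i=1,\dotsc,s{-}1$.

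Next I would apply Lemma~\ref{L:ellBilinear} with this relabeling (the role of "$A_1$" in that lemma played here by $A_s$, and "$A_2,\dotsc,A_s$" played by $A_1,\dotsc,A_{s-1}$): the subscheme $\Delta^s(A_s\cap A_1\cap\dotsb\cap A_{s-1}) = \Delta^s(A_1\cap\dotsb\cap A_s)$ is cut out in the parametrization $\calM_{\alpha^s}\times\calN_{\alpha^1}\times\dotsb\times\calN_{\alpha^{s-1}}$ by the matrix equations obtained by setting the product of the $\ell$-plane parametrization and each $(n{-}\ell)$-plane parametrization to zero. Here the subtle point is bookkeeping about bases: if $H=R_\ell(M\Phi^s)$ and $K_i=C_{n-\ell}\bigl((\Phi^i)^{-1}N_i\bigr)$, then $H^\perp=K_i$ exactly when $(M\Phi^s)\cdot(\Phi^i)^{-1}N_i=0_{\ell\times(n-\ell)}$, since $H\subseteq V$ is spanned (in the basis $\be_\bullet$) by the rows of $M\Phi^s$ and $K_i\subseteq V^*$ is spanned (in the dual basis $\be^*_\bullet$) by the columns of $(\Phi^i)^{-1}N_i$, and the annihilator pairing in these dual bases is ordinary matrix multiplication. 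This gives precisely the system~\eqref{Eq:PDF-I}. Since the parametrization is generically one-to-one onto a dense subset of the intersection~\eqref{Eq:SchubertProblem}, and by Kleiman's Transversality Theorem there are no solutions on the lower-dimensional locus where it fails to be injective (as in the Remark after the three determinantal formulations), the common zeroes of~\eqref{Eq:PDF-I} are in bijection with the solutions of the Schubert problem instance.

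Finally I would do the dimension count. Each equation $MN_i=0_{\ell\times(n-\ell)}$ contributes $\ell(n{-}\ell)$ scalar bilinear equations, and there are $s{-}1$ such matrix equations, for a total of $(s{-}1)\cdot\ell(n{-}\ell)$ equations. On the variable side, $\calM_{\alpha^s}\simeq\C^{\dim(\alpha^s)}=\C^{\ell(n-\ell)-|\alpha^s|}$, and each $\calN_{\alpha^i}\simeq\calM^{(\alpha^i)^\perp}\simeq\calM_{(\alpha^i)^\perp}$ has dimension $\dim\bigl((\alpha^i)^\perp\bigr)=(n{-}\ell)\ell-|(\alpha^i)^\perp|=\ell(n-\ell)-|\alpha^i|$, using that $X_{\alpha^i}\Fdot^i$ and its dual have the same dimension. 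Summing, the number of variables is $\sum_{i=1}^s\bigl(\ell(n{-}\ell)-|\alpha^i|\bigr)=s\cdot\ell(n{-}\ell)-\sum_i|\alpha^i|=s\cdot\ell(n{-}\ell)-\ell(n{-}\ell)=(s{-}1)\cdot\ell(n{-}\ell)$, where the Schubert-problem condition $\sum_i|\alpha^i|=\ell(n{-}\ell)$ is exactly what makes this equal to the number of equations. Hence the system is square.

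The main obstacle I anticipate is not any single deep step but getting the basis/transpose/inverse conventions consistent across the three pieces (the $M\mapsto R_\ell(M\Phi)$ parametrization of primal cells, the $M\mapsto C_{n-\ell}(M^T)$ parametrization of dual cells, and the $\calN_\alpha$ rewriting via $\Phi^{-1}$), so that the annihilator condition $H^\perp=K_i$ really does become~\eqref{Eq:PDF-I} on the nose rather than up to an unnoticed transpose or inverse; once that is pinned down the rest is a direct appeal to Lemmas~\ref{L:ellDiagonal} and~\ref{L:ellBilinear} plus the arithmetic above.
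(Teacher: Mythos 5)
Your proposal is correct and follows essentially the same route as the paper: it invokes Lemma~\ref{L:ellBilinear} with $A_s=X_{\alpha^s}\Fdot^s$ (parametrized by $\calM_{\alpha^s}$) and ${\perp}(A_i)$ parametrized by $\calN_{\alpha^i}$ for $i<s$, appeals to the generality of the flags (Kleiman) to ensure solutions land in the cells where the parametrizations are bijective, and closes with the same dimension count using $\sum_i|\alpha^i|=\ell(n{-}\ell)$. The paper's own proof is terser and leaves the basis/annihilator bookkeeping implicit, but there is no difference in substance.
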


\begin{proof}
 An instance of the Schubert problem~\eqref{Eq:SchubertProblem} is zero-dimensional and each point occurs with multiplicity one.
 The result follows from Lemma~\ref{L:ellBilinear} in which $A_1$ is the open subset of
 $X_{\alpha^s}\Fdot^s$ parametrized by $\calM_{\alpha^s}$ and for $i<s$, $A_i$ is the open
 subset of $X_{\alpha^i}\Fdot^i$ with ${\perp}(A_i)$ parametrized by
 $\calM_{\alpha^{i{\perp}}}$.
 This gives $(s{-}1)\cdot \ell(n{-}\ell)$ bilinear equations in 
 $(s{-}1)\cdot \ell(n{-}\ell)$ variables, and the generality of the flags ensures that all
 points in the intersection of Schubert varieties~\eqref{Eq:SchubertProblem} lie in the
 intersection of Schubert cells. 
\end{proof}

\begin{remark}
 Theorem~\ref{th:main} provides a formulation of an instance of a Schubert problem 
 as a square system, to which the certification afforded by Smale's $\alpha$-theory may be
 applied.  This rectifies the fundamental obstruction to using numerical methods 
 in place of certified symbolic methods for solving Schubert problems.
\hfill{$\diamond$}
\end{remark}

\begin{remark}\label{R:structure}
 The bilinear equations~\eqref{Eq:PDF-I} are partitioned into $s{-}1$ blocks of $\ell(n{-}\ell)$
 equations.  Each block of equations is linear in the $\ell(n{-}\ell)-|\alpha^s|$ variables in $M_{\alpha^s}$,
 with the $i$th block also linear in the variables in $\calN_{\alpha^i}$, which occur in no other
 block.  This bilinear structure can be exploited in the certification via $\alpha$-theory.
 Since all equations are quadratic, the supremum defining the $\gamma$ term for $\alpha$-theory
 is taken over only one term.  
 In particular, if $G$ is the bilinear system and $x$ is a point, we have
 \begin{equation}\label{Eq:gamma}
   \gamma(G,x)\ =\ \left\|\frac{DG(x)^{-1}D^2G}{2}\right\|\ \leq\ 
     \left\|\frac{D^2G}{2}\right\|\cdot\left\|DG(x)^{-1}\right\|
 \end{equation}
 where $DG(x)$ is the Jacobian matrix of $G$ evaluated at $x$ and
 $D^2G$ is the $2^{\rm nd}$ derivative of $G$, which is constant.  
 In practice, one can either compute $\gamma$ exactly or use an easier
 formula to compute an upper bound.  The upper bound in \eqref{Eq:gamma}
 follows from the sub-multiplicative property of operator norms and allows
 one to compute $\|D^2G\|$ once.  A general upper bound for polynomial systems
 is provided in Proposition 3 from \S~I-3 of \cite{SS93}, which could possibly
 be made sharper by exploiting the bilinear structure.
 Similar bilinear structure was exploited for eigenvalue
 and generalized eigenvalue problems in \cite{A14,DS00}.
\hfill{$\diamond$}
\end{remark}

We conclude this section with a short summary of how to derive the corresponding square system from \eqref{Eq:SchubertProblem}.  
In particular, for each instance of a Schubert problem $\balpha$ involving flags
$\Fdot^1,\dotsc,\Fdot^s$, one obtains a parametrized system $G_\balpha$.  
The parameters correspond to matrices $\Phi^1,\dots,\Phi^s$ which define the flags via
$\Fdot^i=\Rdot(\Phi^i)$ for each $i=1,\dotsc,s$. 
\begin{enumerate}
\item    The rows of $\Phi^s$ give a basis of $V$, and $H\in X^\circ_{\alpha^s}\Fdot^s$ if an only
     if $H$ is the row space of a matrix $M\in \calM_{\alpha^s}$ defined relative to this basis. 
    Thus we introduce $\ell(n-\ell)-|\alpha^s|$ variables to parametrize $\calM_{\alpha^s}$.
    A change of basis to $\be_1,\dotsc,\be_n$ induces an action of $\GL(V)$ given by $M\mapsto M\Phi^s$.
    Thus $H\in X^\circ_{\alpha^s}\Fdot^s$ if and only if $H$ is the row space of $M\Phi^s$ for some
    $M\in \calM_{\alpha^s}$. 
    Note that the entries of $M\Phi^s$ are linear in the entries of $M$, and the nonconstant entries
    of $M$ are the primal coordinates for an instance of the Schubert problem $\balpha$. 
    Determining these coordinates solves the instance.

\item    For $i=1,\dotsc,s-1$, we use parameters for $\calN_{\alpha^{i\perp}}$ which are dual to
     those of $\calM_{\alpha^i}$. 
     First, we note the columns of $(\Phi^i)^{-1}$ give a basis of $V^*$, and $H\in
     X^\circ_{\alpha^i}\Fdot^i$ if and only if $H^\perp \in X^\circ_{\alpha^{i\perp}}\Fdot^{i\perp}$
     which occurs when $H^\perp$ is the column space of a matrix $N_i\in \calN_{\alpha^{i\perp}}$
     defined relative to the columns of $(\Phi^i)^{-1}$. 
     Thus we introduce $\ell(n-\ell)-|\alpha^i|$ variables to parametrize
     $\calN_{\alpha^{i\perp}}$. 
     A change of basis to $\be_1^*,\dotsc,\be_n^*$ induces an action of $\GL(V^*)$ given by
     $N_i\mapsto (\Phi^i)^{-1}N_i$, and we have $H^\perp\in X^\circ_{\alpha^{i\perp}}\Fdot^{i\perp}$
     when $H^\perp$ is the column space of $(\Phi^i)^{-1}N_i$ for some $N_i\in \calN_{\alpha^{i\perp}}$. 
     The entries of $(\Phi^i)^{-1}N_i$ are linear in $N_i$, and the nonconstant entries of $N_i$ are dual
     coordinates for an instance of a Schubert problem. 

\item  We now construct $G_\balpha$.  An $\ell$-plane $H$ in the space parametrized by the primal coordinates
       is a member of $X_{\alpha^i}\Fdot^i$ if and only if $H^\perp$ is a member of
       $X_{\alpha^{i\perp}}\Fdot^{i\perp}$.  As the bases $\be_1,\dotsc,\be_n$ and $\be_1^*,\dotsc,\be_n^*$ are
       dual, this membership is characterized by the $\ell(n{-}\ell)$ equations given by 
       $(M\Phi^s)(\Phi^i)^{-1}N_i = 0_{\ell\times(n-\ell)}$. 
       Solving the system of these equations given by all $i=1,\dotsc,s{-}1$ and projecting the solutions to the
       primal factor solves the given instance of $\balpha$.  
\end{enumerate}

\section{Improvements to the primal-dual formulation}\label{S:improve}

The formulation of a Schubert problem in Theorem~\ref{th:main} solves the
problem of certifiability, but is not particularly efficient.
For example, the Schubert problem of Example~\ref{Ex:SP} involves $3\cdot 12=36$ equations
and variables.
We present two improvements to Theorem~\ref{th:main}.
Let $\balpha=(\alpha^1,\dotsc,\alpha^s)$ be a Schubert problem for $\Gr(\ell,V)$ and
$\Fdot^1,\dotsc,\Fdot^s$ be general flags~in~$V$.

\subsection{Efficient local coordinates}\label{SS:improve}
The formulation of Theorem~\ref{th:main} may be improved using the local coordinates
$\calM_\alpha^\beta$ for $X_\alpha\Fdot\cap X_\beta\Fpdot$.
This reduces the number of equations/variables by 
$\lfloor\frac{s}{2}\rfloor \ell(n-\ell)$.

Adding a trivial condition $(n{-}\ell{+}1,\dotsc,n)$ to $\balpha$ if
necessary, we will assume that $s=2k$.
For each $i=1,\dotsc,k$, let $\Phi^i$ be a matrix such that
$\Fdot^{2i-1}=\Rdot(\Phi^i)$ and $\Fdot^{2i}=\Rdot(w_0\Phi^i)$. 
This requires that $\Fdot^{2i-1}$ and $\Fdot^{2i}$ are in linear general position.
For $\alpha,\beta\in\binom{[n]}{\ell}$ with $\alpha_i+\beta_{\ell+1-i}\leq n$ for all $i$,
let \defcolor{$\calN_\alpha^\beta$} be the set of $n\times(n{-}\ell)$ matrices 
$\{M^T\mid M\in \calM^{\alpha^\perp}_{\beta^\perp}\}$.

\begin{theorem}\label{th:main-II}
 The system of equations
 \begin{equation}\label{Eq:PDF-II}
   (M\Phi^k) (\Phi^i)^{-1}N_i\ =\ 0_{k\times(n-\ell)}
   \qquad\mbox{for}\ i=1,\dotsc,k{-}1\,,
 \end{equation}
 where 
 $(M,N_1,\dotsc,N_{k-1})\in \calM_{\alpha^{2k-1}}^{\alpha^{2k}}\times 
   \calN_{\alpha^1}^{\alpha^2}\times\dotsb\times \calN_{\alpha^{2k-3}}^{\alpha^{2k-2}}$ 
 defines the instance of the Schubert problem~\eqref{Eq:SchubertProblem} as the common zeroes of 
 $(k{-}1)\cdot \ell(n{-}\ell)$ bilinear equations in  
\[
   \sum_{i=1}^{k} \bigl(\ell(n{-}\ell)-|\alpha^{2i-1}|-|\alpha^{2i}|\bigr)
   \ =\ (k{-}1)\cdot \ell(n{-}\ell)
\]
 variables, exhibiting the Schubert problem as a square system of equations.
\end{theorem}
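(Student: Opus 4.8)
The plan is to follow the proof of Theorem~\ref{th:main} in structure, but to consume the Schubert conditions two at a time rather than one at a time, using the local coordinates $\calM_\alpha^\beta$ and their transposes $\calN_\alpha^\beta$ in place of $\calM_\alpha$ and $\calN_\alpha$, and invoking the $k$-fold reduction to the small diagonal (Lemmas~\ref{L:ellDiagonal} and~\ref{L:ellBilinear}) rather than the $s$-fold one.

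First I would make the reduction to $s = 2k$ explicit: if $s$ is odd, append the trivial condition $(n{-}\ell{+}1,\dotsc,n)$, whose Schubert variety is all of $\Gr(\ell,V)$ and whose codimension is $0$, so that neither the instance~\eqref{Eq:SchubertProblem} nor the identity $\sum_i|\alpha^i| = \ell(n{-}\ell)$ is disturbed; its flag is irrelevant and may be chosen freely. I then group the flags into the consecutive pairs $(\Fdot^{2i-1},\Fdot^{2i})$, $i = 1,\dotsc,k$. Because the flags are general, each pair is in linear general position, hence is the pair of opposite flags $\Fdot^{2i-1} = \Rdot(\Phi^i)$, $\Fdot^{2i} = \Rdot(w_0\Phi^i)$ of some full-rank matrix $\Phi^i$. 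Setting $A_i := X_{\alpha^{2i-1}}\Fdot^{2i-1}\cap X_{\alpha^{2i}}\Fdot^{2i}$, the instance~\eqref{Eq:SchubertProblem} is $A_1\cap\dotsb\cap A_k$, which by Kleiman's Transversality Theorem is transverse, zero-dimensional, and reduced.

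Next I would assemble the parametrizations to feed into Lemma~\ref{L:ellBilinear}. Proposition~\ref{P:localCoordinates} gives that $M\mapsto R_\ell(M\Phi^k)$ identifies $\calM_{\alpha^{2k-1}}^{\alpha^{2k}}$ with a dense subset of $A_k$ (if the combinatorial condition of that proposition fails for some pair, then that $A_i$, hence the instance, is empty and there is nothing to prove). For $i < k$ I would show that $N \mapsto C_{n-\ell}((\Phi^i)^{-1}N)$ identifies $\calN_{\alpha^{2i-1}}^{\alpha^{2i}}$ with a dense subset of $\perp(A_i)$: by Lemma~\ref{L:dualSchub}, $\perp(A_i) = X_{(\alpha^{2i-1})^\perp}(\Fdot^{2i-1})^\perp\cap X_{(\alpha^{2i})^\perp}(\Fdot^{2i})^\perp$; by Lemma~\ref{L:dual_parametrization} together with the definition $\calN_\alpha^\beta = \{M^T : M\in\calM^{\alpha^\perp}_{\beta^\perp}\}$, the untwisted map $N\mapsto C_{n-\ell}(N)$ parametrizes the analogous intersection for the coordinate flags $\Edot^\perp,\Epdot^\perp$; and, exactly as in the twist observation at the end of \S\ref{SS:dual}, multiplying by $(\Phi^i)^{-1}$ carries $\Edot^\perp$ to $(\Rdot(\Phi^i))^\perp = (\Fdot^{2i-1})^\perp$ and $\Epdot^\perp$ to $(\Rdot(w_0\Phi^i))^\perp = (\Fdot^{2i})^\perp$. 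I expect this matching---keeping straight which $(\Phi^i)^{-1}$-twist sends which standard dual flag to $(\Fdot^{2i-1})^\perp$ versus $(\Fdot^{2i})^\perp$---to be the only genuinely delicate bookkeeping in the argument.

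Finally I would apply Lemma~\ref{L:ellBilinear} with $k$ in place of $s$, its first factor being the dense subset of $A_k$ with the set of matrices there taken to be $\calM_{\alpha^{2k-1}}^{\alpha^{2k}}\Phi^k$, and its $(i{+}1)$st factor ($i = 1,\dotsc,k{-}1$) the dense subset of $A_i$ with $\perp(A_i)$ parametrized by $(\Phi^i)^{-1}\calN_{\alpha^{2i-1}}^{\alpha^{2i}}$. The bilinear equations $MN_i = 0$ furnished by that lemma are exactly~\eqref{Eq:PDF-II}, and they cut out $\Delta^k(A_1\cap\dotsb\cap A_k)$; generality of the flags places every point of the instance in these dense subsets, so projecting to the primal factor recovers~\eqref{Eq:SchubertProblem} with every point simple. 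The count is then immediate: there are $(k{-}1)\cdot\ell(n{-}\ell)$ equations, and, using $\dim\calM_\alpha^\beta = \ell(n{-}\ell) - |\alpha| - |\beta|$ from Proposition~\ref{P:localCoordinates} and, since $\perp$ preserves codimension, $\dim\calN_\alpha^\beta = (n{-}\ell)\ell - |\alpha^\perp| - |\beta^\perp| = \ell(n{-}\ell) - |\alpha| - |\beta|$, the number of variables is $\sum_{i=1}^k\bigl(\ell(n{-}\ell) - |\alpha^{2i-1}| - |\alpha^{2i}|\bigr) = k\cdot\ell(n{-}\ell) - \sum_j|\alpha^j| = (k{-}1)\cdot\ell(n{-}\ell)$, so the system is square.
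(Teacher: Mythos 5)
Your proof is correct and follows exactly the route the paper has in mind: the paper omits this proof with the remark that it ``is similar to that of Theorem~\ref{th:main},'' and your argument is precisely that adaptation---regroup the conditions into $k$ pairs, parametrize each pair-intersection $A_i = X_{\alpha^{2i-1}}\Fdot^{2i-1}\cap X_{\alpha^{2i}}\Fdot^{2i}$ by $\calM_\alpha^\beta$ (primal factor) or $\calN_\alpha^\beta$ (dual factors) via Proposition~\ref{P:localCoordinates} and Lemma~\ref{L:dual_parametrization}, then invoke Lemma~\ref{L:ellBilinear} with $k$ factors and count. The bookkeeping you flag as delicate (which $(\Phi^i)^{-1}$-twist carries $\Edot^\perp,\Epdot^\perp$ to $(\Fdot^{2i-1})^\perp,(\Fdot^{2i})^\perp$) checks out, and your handling of the degenerate case where some $\alpha^{2i-1}_j+\alpha^{2i}_{\ell+1-j}>n$ and of the odd-$s$ padding is correct.
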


We omit the proof, as it is similar to that of Theorem~\ref{th:main}.

\begin{remark}
 The equations~\eqref{Eq:PDF-II} exhibit a block structure similar  to the
 equations~\eqref{Eq:PDF-I}, and the same comments made in Remark~\ref{R:structure}
 about their structure also apply here.

 We determine the reduction in the numbers of equations/variables obtained by using the 
 formulation of Theorem~\ref{th:main-II} in place of the formulation of Theorem~\ref{th:main}.
 Set $\defcolor{\nu}:=\ell(n{-}\ell)$.
 If $s=2k$ is even, then we do not add a trivial Schubert condition to $\balpha$.
 In this case the formulation of Theorem~\ref{th:main} involves $(2k{-}1)\cdot\nu$
 equations while that of Theorem~\ref{th:main-II} involves $(k{-}1)\cdot\nu$, a
 reduction of $k\nu$.

 If $s=2k{-}1$ is odd, then we add a trivial Schubert condition and the number of
 equations from Theorem~\ref{th:main} is $(2k{-}2)\cdot\nu$ and 
 $(k{-}1)\cdot\nu$ in  Theorem~\ref{th:main-II}, for a reduction of 
 $(k{-}1)\cdot\nu$.
 Thus the formulation of Theorem~\ref{th:main-II} reduces the numbers of equations/variables by
 $\lfloor\frac{s}{2}\rfloor\cdot \ell(n{-}\ell)$ from that of Theorem~\ref{th:main}.
\hfill{$\diamond$}
\end{remark}

\subsection{Exploiting hypersurface Schubert varieties}
A further improvement is possible when the Schubert problem $\balpha$ includes some
codimension 1 (hypersurface) conditions.
Let us write  \defcolor{$\I$} for the Schubert condition $(n{-}\ell,n{-}\ell+2,\dotsc,n)$, which defines
  a hypersurface Schubert variety.
Recall that the Schubert variety $X_{\sI}\Fdot$ is defined by a
single determinant.
Suppose that $\balpha$ has the form
$(\alpha^1,\dotsc,\alpha^{2k},\I,\dotsc,\I)$, where there are at least $t$
occurrences of $\I$ and $s=2k+t$.

For each $i=1,\dotsc,k$, let $\Phi^i$ be a matrix giving the pair of opposite flags 
$\Fdot^{2i-1}=\Rdot(\Phi)$ and $\Fdot^{2i}=\Rdot(w_0\Phi)$ as in Subsection~\ref{SS:improve}.
For each $j=1,\dotsc,t$, let $\Psi^j$ be a $(n{-}\ell)\times n$ matrix with
$R_{n-\ell}(\Psi^j)$
the linear subspace $F^{2k+j}_{n-\ell}$ in the flag $\Fdot^{2k+j}$.

\begin{theorem}\label{th:main-III}
 With these definitions, the system of equations
 \begin{equation}\label{Eq:PDF-III}
  \begin{array}{rcl}
   (M\Phi^k) (\Phi^i)^{-1}N_i\ =\ 0_{\ell\times(n-\ell)}\vspace{5pt}
   &\mbox{for}& i=1,\dotsc,k{-}1\,,\\
   \det\left(\begin{array}{c}M\\ \Psi^j\end{array}\right)\ =\ 0
   &\mbox{for}& j=1,\dotsc,t\,,
  \end{array}
 \end{equation}
 where 
 $(M,N_1,\dotsc,N_{k-1})\in \calM_{\alpha^{2k-1}}^{\alpha^{2k}}\times 
   \calN_{\alpha^1}^{\alpha^2}\times\dotsb\times\calN_{\alpha^{2k-3}}^{\alpha^{2k-2}}$ 
 defines the instance of the Schubert problem~\eqref{Eq:SchubertProblem} as the common zeroes of 
 $(k{-}1)\cdot \ell(n{-}\ell)$ bilinear equations and $t$ determinantal equations in  
\[
   \sum_{i=1}^{k} \bigl(\ell(n{-}\ell)-|\alpha^{2i-1}|-|\alpha^{2i}|\bigr)
   \ =\ k\cdot \ell(n{-}\ell) - \sum_{i=1}^{2k} |\alpha^i|
   \ =\ (k{-}1)\cdot \ell(n{-}\ell) + t
\]
 variables, exhibiting the Schubert problem as a square system of equations.
\end{theorem}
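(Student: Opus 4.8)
The plan is to follow the same architecture as the proof of Theorem~\ref{th:main}, using Lemma~\ref{L:ellBilinear} for the bilinear block of equations and Proposition~\ref{P:minor_eqns} (in the hypersurface case, a single determinant) for the determinantal block, then verifying that the dimension count matches so that the system is square and that generality of the flags forces all solutions into the open cells where the local coordinates are valid. Concretely, set $\nu := \ell(n{-}\ell)$. By Proposition~\ref{P:localCoordinates} and Lemma~\ref{L:dual_parametrization}, for $i=1,\dots,k{-}1$ the matrices $N_i\in\calN_{\alpha^{2i-1}}^{\alpha^{2i}}$ parametrize (via $N\mapsto C_{n-\ell}((\Phi^i)^{-1}N)$) a dense open subset of ${\perp}(X_{\alpha^{2i-1}}\Fdot^{2i-1}\cap X_{\alpha^{2i}}\Fdot^{2i})$, and $M\in\calM_{\alpha^{2k-1}}^{\alpha^{2k}}$ parametrizes (via $M\mapsto R_\ell(M\Phi^k)$) a dense open subset $A$ of $X_{\alpha^{2k-1}}\Fdot^{2k-1}\cap X_{\alpha^{2k}}\Fdot^{2k}$.

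First I would handle the bilinear equations exactly as before: apply Lemma~\ref{L:ellBilinear} with $A_1 = A$ and, for $i=2,\dots,k$, with $A_i$ the relevant open pairwise intersection, to conclude that the equations $(M\Phi^k)(\Phi^i)^{-1}N_i = 0_{\ell\times(n-\ell)}$ for $i=1,\dots,k{-}1$ cut out, inside the product of local coordinate spaces, the locus where the $\ell$-plane $H = R_\ell(M\Phi^k)$ simultaneously lies in all of $X_{\alpha^1}\Fdot^1,\dots,X_{\alpha^{2k}}\Fdot^{2k}$ (via the dual-diagonal identifications), scheme-theoretically. Next I would incorporate the hypersurface conditions: by the Corollary after Lemma~\ref{L:Number_eqs}, each $X_{\sI}\Fdot^{2k+j}$ is defined by the single equation $\det\smMat{M\Phi^k}{\Psi^j_{\,}}=0$ wait --- more precisely, since $H=R_\ell(M\Phi^k)$ and $R_{n-\ell}(\Psi^j)=F^{2k+j}_{n-\ell}$, the condition $H\cap F^{2k+j}_{n-\ell}\neq\{0\}$ is exactly $\rank\smMat{M\Phi^k}{\Psi^j}\leq n{-}1$, i.e. $\det\smMat{M\Phi^k}{\Psi^j}=0$, which (after absorbing $\Phi^k$ into $M$, or noting $\det\smMat{M\Phi^k}{\Psi^j}=\pm\det(\Phi^k)\det\smMat{M}{\Psi^j(\Phi^k)^{-1}}$) matches the stated equation up to the harmless choice of how $\Psi^j$ is expressed; I would simply state the determinantal equation in the basis given by $\Phi^k$ and cite Proposition~\ref{P:minor_eqns}. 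Intersecting these $t$ hypersurface conditions with the previously cut-out locus gives, scheme-theoretically, the full intersection~\eqref{Eq:SchubertProblem} restricted to the open cells.

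Then I would do the counting. The number of equations is $(k{-}1)\cdot\nu + t$. The number of variables is $\dim\calM_{\alpha^{2k-1}}^{\alpha^{2k}} + \sum_{i=1}^{k-1}\dim\calN_{\alpha^{2i-1}}^{\alpha^{2i}}$, and each such local coordinate space has dimension $\nu - |\alpha^{2i-1}| - |\alpha^{2i}|$ by Proposition~\ref{P:localCoordinates}, so the total is $\sum_{i=1}^{k}(\nu - |\alpha^{2i-1}| - |\alpha^{2i}|) = k\nu - \sum_{i=1}^{2k}|\alpha^i|$. Since $\balpha$ is a Schubert problem, $\sum_{i=1}^{2k}|\alpha^i| + t\cdot|\sI| = \nu$ and $|\sI|=1$, so $\sum_{i=1}^{2k}|\alpha^i| = \nu - t$, whence the variable count equals $k\nu - (\nu-t) = (k{-}1)\nu + t$, matching the equation count; this is the chain of equalities displayed in the statement. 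Finally, Kleiman's Transversality Theorem guarantees the instance~\eqref{Eq:SchubertProblem} is transverse, zero-dimensional, and reduced, and generality of the flags ensures every solution point lies in the intersection of the open Schubert cells where the parametrizations $\calM_{\alpha^{2k-1}}^{\alpha^{2k}}$ and $\calN_{\alpha^{2i-1}}^{\alpha^{2i}}$ are isomorphisms onto dense subsets; hence projecting the solution set of~\eqref{Eq:PDF-III} to the $M$-factor recovers exactly the instance. The only real subtlety — the part worth stating carefully rather than the routine dimension bookkeeping — is confirming that the single determinantal equation genuinely defines $X_{\sI}\Fdot$ scheme-theoretically (not merely set-theoretically) on the relevant cell, which is exactly the content of the Corollary to Lemma~\ref{L:Number_eqs} combined with reducedness of the instance; given that, the proof is parallel to Theorem~\ref{th:main} and I would compress it accordingly.
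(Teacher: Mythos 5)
Your proposal is correct and takes the route the paper intends (the paper in fact omits the proof, noting only that Theorems~\ref{th:main-II} and~\ref{th:main-III} follow the pattern of Theorem~\ref{th:main}); the reduction to Lemma~\ref{L:ellBilinear} for the bilinear block, the single-determinant description of hypersurface Schubert varieties from the Corollary to Lemma~\ref{L:Number_eqs} for the remaining conditions, the dimension bookkeeping using $\sum_{i=1}^{2k}|\alpha^i|=\ell(n{-}\ell)-t$, and the appeal to Kleiman plus generality to force solutions into the parametrized open cells are exactly the right ingredients. You also correctly spotted that, for consistency with $H=R_\ell(M\Phi^k)$, the determinantal equation should read $\det\bigl(\begin{smallmatrix}M\Phi^k\\ \Psi^j\end{smallmatrix}\bigr)=0$ (equivalently one can express $\Psi^j$ in the basis given by the rows of $\Phi^k$), a small but genuine inconsistency in the statement that your reconciliation handles properly.
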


\begin{remark}
The previous comments based on the block structure of equations remain valid for the
formulation~\eqref{Eq:PDF-III}. 

Suppose now that $\balpha$ has $a$ conditions $\alpha^1,\dotsc,\alpha^a$ with $|\alpha^i|>1$
for $i=1,\dotsc,a$ and $b\geq 1$ occurrences of the codimension 1 condition $\I$.
If $a=2k$ is even, then we apply Theorem~\ref{th:main-III} with $t=b$.
If $a=2k{-}1$ is odd, then we set $\alpha^{2k}=\I$ and apply Theorem~\ref{th:main-III} with
$t=b{-}1$ (necessarily, $b\geq 2$ for this). 
Table~\ref{T:savings} shows the gain in efficiency from using Theorem~\ref{th:main-III} 
in place of Theorem~\ref{th:main-II} when we have hypersurface Schubert conditions,
recording the reduction in the number of equations/variables.
Set $\nu:=\ell(n{-}\ell)$.
In all cases we have a net reduction.
\hfill{$\diamond$}
\end{remark}

\begin{table}[htb]
 \caption{Efficiency gain with hypersurfce Schubert conditions}\label{T:savings}

 \begin{tabular}{|c||c|c|c|c|}\hline
  $(a,b)\mod 2$&$(0,0)$&$(0,1)$&$(1,0)$&$(1,1)$\raisebox{-5pt}{\rule{0pt}{17pt}}\\\hline
  reduction  &$\lfloor\frac{b}{2}\rfloor \nu-b$
             &$\lfloor\frac{b+1}{2}\rfloor \nu-b$
             &$\lfloor\frac{b}{2}\rfloor \nu-b+1$\raisebox{-6pt}{\rule{0pt}{18pt}}
             &$\lfloor\frac{b}{2}\rfloor \nu-b+1$\\\hline
 \end{tabular}
\end{table}

\begin{example}\label{Ex:437}
 We compare all these formulations for  a Schubert problem in $\Gr(3,9)$.
 Let $\beta:=(4,8,9)$, which has $|\beta|=3$ and corresponds to the condition that
 a 3-plane $H$ meets a fixed 4-plane nontrivially.
 Set $\balpha := (\beta,\beta,\beta,\beta,\I,\I,\I,\I,\I,\I)$, which has 437
 solutions. 

 The Grassmannian $\Gr(3,9)$ is defined in Pl\"ucker space $\P^{83}$ by 1050 independent
 quadratic Pl\"ucker equations.
 Sinice $\|\beta\|=10$ and $\|\I\|=1$, any instance of the Schubert problem $\balpha$ is
 defined in Pl\"ucker space by 1050 quadratic and 46 linear equations.

 The formulation of \S~\ref{SSS:local_big} in local coordinates 
 $(X\colon I_a)$ for $\Gr(3,9)$ with $X\in\Mat_{3\times 6}$ uses 18 variables and 46
 independent cubic minors/determinants to express the Schubert problem $\balpha$.
 Using the local coordinates $\calM_\beta$ as in \S~\ref{SSS:local_med}, reduces this to
 15 variables and 36 cubic equations, while the formulation of \S~\ref{SSS:local_small}
 using local coordinates $\calM_\beta^\beta$ involves 12 variables and 26 cubic equations.
 These formulations are all overdetermined.

 In contrast, the formulation of Theorem~\ref{th:main} for $\balpha$ uses $9\cdot 18=162$
 variables and 162 bilinear equations, that of Theorem~\ref{th:main-II} uses 
 $4\cdot 18= 72$ variables and bilinear equations, while that of Theorem~\ref{th:main-III} 
 uses $24$ variables with 18 bilinear equations and six cubic determinants, coming from the six
 codimension one conditions $\I$.
 
 To test the square formulation of $24$ variables, we used Bertini \cite{BHSW06} to solve an instance of this Schubert problem given by random real flags.
 The computation consumed approximately $20.37$ gigaHertz-hours of processing power to calculate $437$ approximate solutions, and the output suggests that $85$ solutions are real $3$-planes while the other $352$ are nonreal.
 For this, we used regeneration which is not specific to Schubert calculus and which may be done efficiently in parallel.

 We used rational arithmetic in alphaCertified \cite{alphaCertified} to prove that the $437$ solutions given by Bertini are approximate solutions to the computed instance, and that they correspond to distinct solutions.
 Since they are distinct, theorems of Schubert calculus guarantee that we have approximations for every solution to the instance.
 We also verified that $85$ of the approximate solutions correspond to real solutions.
 This calculation took approximately $2.00$ gigaHertz-hours of processing power.
 Certification may be done in parallel, efficiently using up to $437$ processors.
\hfill{$\diamond$}
\end{example}

\section{Primal-dual formulation for flag manifolds}\label{S:FlagVariety}

A flag manifold is the set of all flags of linear subspaces of specified dimensions, and
is therefore a generalization of the Grassmannian.
Flag manifolds have Schubert varieties and Schubert problems, and these may be formulated
as systems of determinantal equations in local coordinates.
Like the Grassmannian, there is a primal-dual formulation of Schubert problems on flag
manifolds.

Let $V$ be an $n$-dimensional complex vector space and let 
$\defcolor{\adot} \colon 0<a_1<\dotsb<a_t<n$ be a sequence of integers.
A (partial) \demph{flag of type $\adot$} is an increasing sequence
\[
   \defcolor{V_{\adot}}\ \colon\ 
    V_{a_1}\ \subsetneq\ V_{a_2}\ \subsetneq\ \dotsb\ \subsetneq\ 
    V_{a_t}\ \subsetneq\ V\,,
\]
of linear subspaces of $V$ with $\dim V_{a_i}=a_i$ for $i=1,\dotsc,t$.
A flag is \demph{complete} if $\adot=[n{-}1]$.
The set of all flags of type $\adot$ forms the flag manifold \defcolor{$\Fl(\adot,V)$},
which is a homogeneous space for the general linear group $\GL(V)$.
If we set $a_0:=0$, then it has dimension 
\[
    \defcolor{\dim\adot}\ :=\ 
      \sum_{i=1}^t (a_i-a_{i-1})(n-a_i)\,.
\]
If $\defcolor{P_{\adot}}$ is the subgroup stabilizing a flag of type $\adot$,
then $\Fl(\adot,V)\simeq GL(n,\C)/P_{\adot}$.
The Weyl group $W_{\adot}$ of $P_{\adot}$ is the Young subgroup 
$S_{a_1}\times S_{a_2-a_1}\times\dotsb\times S_{n-a_t}$ of the symmetric group $S_n$,
which is the Weyl group of $GL(n,\C)$.

Given a complete flag $\Fdot$, the flag manifold has a decomposition into Schubert cells whose
closures are Schubert varieties.
A Schubert cell consists of all flags $V_{\adot}$ having the same specified position with
respect to the reference flag $\Fdot$.
These positions are indexed by permutations $w\in S_n$ on $n$ letters with descents in the set
$\adot$, 
\[
   \defcolor{W^{\adot}}\ :=\ \{ w\in S_n\;\mid\;
     w(i)<w(i{+}1)\quad\mbox{if}\quad i\not\in\adot\}\,,
\]
which are also minimal length representatitives of cosets of $W_{\adot}$ in $S_n$.
The Schubert variety associated to a (complete) flag $\Fdot$ and permutation 
$w\in W^{\adot}$ is
 \begin{equation}\label{Eq:SchubertVarietyTypeA}
   \defcolor{X_w\Fdot}\ :=\ 
   \{ V_{\adot}\in\Fl(\adot,V)\;\mid\;
      \dim V_{a_i}\cap F_j\ \geq\ \#\{k\leq a_i\mid w(k)\leq j\}\}\,.
 \end{equation}
 For $a\in\adot$ we have the map $w\mapsto w|_a:=\{w(1),\dotsc,w(a)\}$ which sends $W^{\adot}$
 to $\binom{[n]}{a}$.
 An equivalent form of the definition~\eqref{Eq:SchubertVarietyTypeA} is that 
 \begin{equation}\label{Eq:GrassProjection}
   V_{\adot}\in X_w\Fdot\quad\mbox{if and only if}\quad
   V_a\in X_{w|_a}\Fdot\subset\Gr(a,V)\quad\mbox{for all } a\in\adot\,.
 \end{equation}
 When $t=1$ so that $\adot=\{a\}$, then $\Fl(\adot,V)=\Gr(a,V)$, permutations in $W^{\{a\}}$ are
  in bijection with $\binom{[n]}{a}$ via $w\mapsto w|_a$, and we obtain the same
  Schubert varieties as before.

As explained in~\cite[Ch.~10]{Fulton}, matrices in partial echelon form parametrize
Schubert cells. 
For $w\in W^{\adot}$ let \defcolor{$\calM_w$} be the set of those $a_t\times n$ matrices
$(m_{i,j})$ where for all $i,j$, 
 \begin{eqnarray*}
   m_{i,j}&=&1\qquad\mbox{\rm if } j=w(i)\\
   m_{i,j}&=&0\qquad\mbox{\rm if } j>w(i)\quad\mbox{\rm or}\quad
             j=w(k)\ \mbox{\rm for some }k<i\,.
 \end{eqnarray*}
Note then that $m_{i,j}$ is $1$ or $0$ unless $j=w(k)<w(i)$ for some $k>i$.
Thus $\calM_w\simeq\C^{\ell(w)}$, where $\ell(w)$ is the length of $w$, which is 
\[
   \defcolor{\ell(w)}\ :=\ \#\{k>i\mid w(k)<w(i)\}\,.
\]
For example, here are $\calM_{463512}$ with $\adot=2<4$ and $\calM_{3625714}$ for
$\adot=2<5$ 
\[
  \left(\begin{array}{cccccc}
   m_{1,1}&m_{1,2}&m_{1,3}& 1 &   0   & 0 \\
   m_{2,1}&m_{2,2}&m_{2,3}& 0 &m_{2,5}& 1 \\
   m_{3,1}&m_{3,2}&  1  & 0 &  0  & 0 \\
   m_{4,1}&m_{4,2}&  0  & 0 &  1  & 0 
     \end{array}\right)
   \qquad
  \left(\begin{array}{ccccccc}
    m_{1,1}&m_{1,2}& 1 &   0   &  0  &  0 & 0\\
    m_{2,1}&m_{2,2}& 0 &m_{2,4}&m_{2,5}& 1  & 0\\
    m_{3,1}&  1  & 0 &  0  &  0  & 0 & 0 \\
    m_{4,1}&  0  & 0 &m_{4,4}&  1  & 0 & 0 \\
    m_{5,1}&  0  & 0 &m_{5,4}&  0  & 0 & 1 
  \end{array}\right)
\]

Recall that given a full rank $a_t\times n$ matrix $M$, $R_{\adot}(M)$ is the flag 
of type $\adot$ whose $a_i$-dimensional linear subspace is the span of the first $a_i$
rows of $M$.
Under the map $M\mapsto R_{\adot}(M)$, the set $\calM_w$ of partial echelon matrices
parametrizes the Schubert cell $X_w^\circ\Edot$.
This extends to any flag, not just $\Edot$.

\begin{proposition}
 Let $\Phi$ be a full rank $n\times n$ matrix.
 For $w\in W^{\adot}$, the set $\calM_w$ of partial echelon matrices parametrizes the Schubert cell
 $X_w^\circ\Rdot(\Phi)$ via the map $M\mapsto R_{\adot}(M\Phi)$.
\end{proposition}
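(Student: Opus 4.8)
The plan is to reduce the statement to the already-established Grassmannian case via the characterization~\eqref{Eq:GrassProjection}. First I would observe that the partial echelon matrix $M\in\calM_w$ has the property that, for each $a\in\adot$, the submatrix $M_a$ consisting of its first $a$ rows is exactly the echelon matrix (after clearing with row operations among the first $a$ rows, which does not change the row span) associated to the Schubert condition $w|_a\in\binom{[n]}{a}$. Concretely, the pivot columns of $M_a$ are $\{w(1),\dots,w(a)\}$ sorted into increasing order, which is precisely $w|_a$, and the defining vanishing conditions of $\calM_w$ restricted to these rows are exactly those of $\calM_{w|_a}$ up to the harmless row operations. Hence $R_a(M)=R_{a_i}(M)$ belongs to the cell $X^\circ_{w|_a}\Edot\subset\Gr(a,V)$, and by the displayed equivalence~\eqref{Eq:GrassProjection}, $R_{\adot}(M)\in X^\circ_w\Edot$.

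Next I would upgrade from $\Edot$ to an arbitrary flag $\Rdot(\Phi)$. Here I would invoke the Grassmannian statement of Proposition~\ref{P:localCoordinates}: for each $a\in\adot$, the map $\calM_{w|_a}\ni M_a\mapsto R_a(M_a\Phi)$ is an isomorphism onto $X^\circ_{w|_a}\Rdot(\Phi)$. Since $R_{a_i}(M\Phi)$ is the row span of the first $a_i$ rows of $M\Phi$, which equals $(M_{a_i})\Phi$ up to row operations within those rows, we get $R_{a_i}(M\Phi)\in X^\circ_{w|_{a_i}}\Rdot(\Phi)$ for every $i$. Applying~\eqref{Eq:GrassProjection} once more (with the reference flag $\Rdot(\Phi)$ in place of $\Edot$) gives $R_{\adot}(M\Phi)\in X^\circ_w\Rdot(\Phi)$, so the map lands in the claimed Schubert cell.

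It then remains to check that $M\mapsto R_{\adot}(M\Phi)$ is a bijection from $\calM_w$ onto $X^\circ_w\Rdot(\Phi)$. Since $\Phi$ is invertible, it suffices to treat $\Phi=\Id$. For injectivity I would argue that the partial echelon form is a normal form: given a flag $V_{\adot}\in X^\circ_w\Edot$, one recovers each $V_{a_i}$, writes down the unique matrix whose rows are in partial echelon form with the prescribed pivot pattern determined by $w$ and whose first $a_i$ rows span $V_{a_i}$ for all $i$, and this reverses the construction. For surjectivity (and to pin down injectivity cleanly), I would compare dimensions: $\calM_w\simeq\C^{\ell(w)}$ by the parameter count just given, and $\dim X^\circ_w\Edot=\ell(w)$ as well (this is the standard fact that the Bruhat cell indexed by $w\in W^{\adot}$ has dimension $\ell(w)$, consistent with $\dim\adot=\sum_w$ running over top-length $w$); since the map is an injective morphism of irreducible varieties of the same dimension onto an irreducible target, and $X^\circ_w\Edot$ is an affine cell, it is an isomorphism. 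Alternatively, surjectivity can be read off directly from~\eqref{Eq:GrassProjection} by Gaussian elimination on a matrix representing a given flag in the cell.

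The main obstacle I anticipate is the bookkeeping in the first step: verifying precisely that the vanishing/pivot conditions defining $\calM_w$ restrict, on the first $a$ rows, to conditions equivalent (modulo row operations within those rows) to those defining $\calM_{w|_a}$, and that the extra free entries $m_{i,j}$ with $j=w(k)<w(i)$ for some $k>i$ are accounted for correctly when $j$ is itself a pivot column of a later row crossing the level $a$. This is the sort of index-chasing that~\eqref{Eq:GrassProjection} is designed to absorb, so once that equivalence is stated cleanly the rest is formal; I would lean on~\eqref{Eq:GrassProjection} rather than re-deriving the Grassmannian case by hand.
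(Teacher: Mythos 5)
The paper states this proposition without proof, deferring to Fulton's book (``As explained in~\cite[Ch.~10]{Fulton}, matrices in partial echelon form parametrize Schubert cells''), so there is no ``paper's own proof'' to compare against. Your organizational idea---reduce to the Grassmannian case via the equivalence~\eqref{Eq:GrassProjection} and then argue bijectivity---is sound, but two points need tightening.

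First, \eqref{Eq:GrassProjection} as stated concerns the \emph{closed} Schubert varieties $X_w$, not the cells $X^\circ_w$. The cell version you need, namely $V_{\adot}\in X^\circ_w\Fdot$ iff $V_a\in X^\circ_{w|_a}\Fdot$ for all $a\in\adot$, is true but requires the extra observation that the data $\{w|_a : a\in\adot\}$ determines $w\in W^{\adot}$ (since $w$ is required to be increasing on each block of $\adot$); without that, knowing each $V_a$ lands in the right Grassmannian cell would only tell you that $V_{\adot}$ has \emph{some} position with the right projections.

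Second, and more substantively, the bijectivity argument is the crux of the proposition and your proposal does not close it. The dimension-count route is not enough: an injective morphism of irreducible varieties of the same dimension need not be surjective (e.g.\ $\C^\times\hookrightarrow\C$), and even in the favorable setting where source and target are both $\C^{\ell(w)}$ you would be invoking Ax--Grothendieck---and in any case you would still need to establish injectivity first, which you have only asserted. The real content is normal-form uniqueness: for each $V_{\adot}\in X^\circ_w\Edot$ there is a unique $M\in\calM_w$ with $R_{\adot}(M)=V_{\adot}$, which gives bijectivity in one stroke. Your sketch of this is on the right track but should be proved, most cleanly by induction on the blocks of $\adot$: the echelon basis of $V_{a_1}$ with pivots $w(1)<\dotsb<w(a_1)$ is unique; for the next block the conditions $m_{i,j}=0$ for $j>w(i)$ and for $j=w(k)$ with $k<i$ (note the latter includes $k\leq a_1$) uniquely pin down the coset representatives of a complement to $V_{a_1}$ in $V_{a_2}$; and so on. Once you have this, the case of a general full-rank $\Phi$ follows immediately as you say, since $M\mapsto M\Phi$ intertwines the $\GL(V)$-action sending $\Edot$ to $\Rdot(\Phi)$ with the action on Schubert cells, and the reduction to the Grassmannian via \eqref{Eq:GrassProjection} becomes scaffolding you can dispense with.
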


Let $\Phi$ be a full rank $n\times n$ matrix and let $M$ be a full rank $a_t\times n$ matrix.
By the definition~\eqref{Eq:SchubertVarietyTypeA}, for any $w\in W^{\adot}$ the partial flag
$R_{\adot}(M)$ lies in $X_w\Rdot(\Phi)$ if and only if, for all $i=1,\dotsc,r$ and
$j=1,\dotsc,n$, we have that 
 \begin{equation}\label{Eq:RankConditions}
   \rank \left(\begin{array}{c}M_{a_i}\\\Phi_j\end{array}\right)\ \leq\ 
    a_i+j-\#\{k\leq a_i\mid w(k)\leq j\}\,.
 \end{equation}
Set $\defcolor{r_w(a_i,j)}:=a_i+j-\#\{k\leq a_i\mid w(k)\leq j\}$, which is this bound on rank.
We use~\eqref{Eq:RankConditions} to give the traditional determinantal equations defining
a Schubert variety.

\begin{proposition}\label{P:minor_eqns_flag}
 Let $\calM$ be a set of matrices parametrizing a subset $Y$ of the flag manifold
 $\Fl(\adot,n)$, $\Phi$ a full-rank matrix, and $w\in W^{\adot}$.
 The intersection of\/ $Y$ with the Schubert variety $X_{w}\Rdot(\Phi)$ is defined 
 in the coordinates $\calM$ for $Y$ by the vanishing of the square $r_w(a_i,j){+}1$ 
 minors of\/ $\smMat{M_{a_i}}{\Phi_{j}}$, for $j=1,\dotsc,t$ and $k=1,\dotsc,n$.
\end{proposition}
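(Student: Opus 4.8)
The plan is to follow the proof of Proposition~\ref{P:minor_eqns} essentially verbatim, translating the rank conditions~\eqref{Eq:RankConditions}---derived just above from the definition~\eqref{Eq:SchubertVarietyTypeA}---into the vanishing of minors, one incidence condition at a time. Since $\calM$ parametrizes $Y$ via $M\mapsto R_{\adot}(M)$, a point of $\calM$ lies over $Y\cap X_w\Rdot(\Phi)$ exactly when its matrix $M$ satisfies $R_{\adot}(M)\in X_w\Rdot(\Phi)$; by~\eqref{Eq:RankConditions} this holds precisely when $\rank\smMat{M_{a_i}}{\Phi_j}\le r_w(a_i,j)$ for every $i=1,\dotsc,t$ and $j=1,\dotsc,n$, where $r_w(a_i,j)=a_i+j-\#\{k\le a_i\mid w(k)\le j\}$.

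The only step with any content is the elementary fact, already used in Subsection~\ref{SS:det}, that for any matrix $A$ one has $\rank A\le r$ if and only if all $(r{+}1)\times(r{+}1)$ minors of $A$ vanish. Applying this with $A=\smMat{M_{a_i}}{\Phi_j}$ and $r=r_w(a_i,j)$ turns each rank bound into the vanishing of the $r_w(a_i,j){+}1$ minors of $\smMat{M_{a_i}}{\Phi_j}$. Taking the conjunction over all pairs $(i,j)$ and reading the conditions in the coordinates $\calM$ for $Y$ then exhibits $Y\cap X_w\Rdot(\Phi)$ as the common zero locus of precisely the stated family of minors, which is the assertion. One could instead invoke~\eqref{Eq:GrassProjection} to reduce to the Grassmannian statement Proposition~\ref{P:minor_eqns} applied to each projection $V_a\in\Gr(a,V)$ with $a\in\adot$, but going directly through~\eqref{Eq:RankConditions} is shorter.

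I do not expect a genuine obstacle here, only bookkeeping: one should confirm that $r_w(a_i,j){+}1$ is the size of the minors produced by the bound in~\eqref{Eq:RankConditions}, and note---as in the Grassmannian case---that this is a set-theoretic description whose defining equations are typically far from minimal, since many pairs $(i,j)$ give vacuous bounds (when $r_w(a_i,j)\ge\min(a_i+j,n)$) or bounds implied by others. One also uses tacitly that each $M\in\calM$ has its first $a_i$ rows independent for every $a_i\in\adot$---automatic since $\calM$ parametrizes flags of type $\adot$---so that $R_{a_i}(M)$ really is an $a_i$-plane and $\rank\smMat{M_{a_i}}{\Phi_j}$ equals $a_i+j-\dim\bigl(R_{a_i}(M)\cap R_j(\Phi)\bigr)$.
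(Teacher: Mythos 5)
Your argument is exactly the one the paper intends: the proposition is stated immediately after deriving the rank conditions~\eqref{Eq:RankConditions} from~\eqref{Eq:SchubertVarietyTypeA}, and the paper leaves the translation to vanishing minors implicit because it is the same elementary rank-to-minors step used in Subsection~\ref{SS:det}. Your bookkeeping remarks (redundancy of many $(i,j)$ pairs, the tacit use that the first $a_i$ rows of $M$ are independent) are correct and consistent with how the paper treats the Grassmannian case.
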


The codimension of $X_w\Fdot$ is $\defcolor{|w|}:=\dim(\adot)-\ell(w)$.
A \demph{Schubert problem} on $\Fl(\adot,V)$ is a list $w^1,\dotsc,w^s$ of permutations in
$W^{\adot}$ such that $|w^1|+\dotsb+|w^s|=\dim(\adot)$.
By Kleiman's Transversality Theorem, if $\Fdot^1,\dotsc,\Fdot^s$ are general flags then
the intersection
 \begin{equation}\label{Eq:Flag_SP}
  X_{w^1}\Fdot^1\,\cap\,
  X_{w^2}\Fdot^2\,\cap\, \dotsb \,\cap\,
  X_{w^s}\Fdot^s
 \end{equation}
either is empty or is transverse and consists of finitely many points.
As in Section~\ref{S:Schubert_Calculus}, we may formulate a Schubert problem as a system
of determinantal equations (given by Proposition~\ref{P:minor_eqns_flag}) for membership
in $X_{w^i}\Fdot^i$ for $i=1,\dotsc,s{-}1$ in the system $\calM_{w^s}$ of local coordinates
for $X^\circ_{w^s}\Fdot^s$.
This will be overdetermined unless $|w^i|=1$ for $i=1,\dotsc,s{-}1$.

\subsection{Primal-dual formulation for Schubert problems in $\Fl(\adot,V)$}

Given a sequence $\adot\subset[n{-}1]$ , let \defcolor{$\adot^r$} be its reversal
\[
   \adot^r\ \colon\  n{-}a_t\ <\ n{-}a_{t-1}\ <\ \dotsb\ <\ 
                     n{-}a_2\ <\ n{-}a_1\,.
\]
When $n=9$, we have $(2,3,5)^r=(4,6,7)$.
The annihilator map of Section~\ref{S:dual} gives an isomorphism
$\perp\colon\Fl(\adot,V)\xrightarrow{\,\sim\,}\Fl(\adot^r,V^*)$ in which
${\perp}(V_{\adot})_{a^r_j}=(V_{a_{t+1-j}})^\perp$.
Let $w_0\in S_n$ be the permutation so that $w_0(i)=n{+}1{-}i$.
Note that $\ell(w)=\ell(w_0ww_0)$ and $w\in W^{\adot}$ if and only if 
$w_0ww_0\in W^{\adot^r}$.
We give the version of Lemma~\ref{L:dualSchub} for $\Fl(\adot,V)$.

\begin{lemma}
 For a Schubert variety $X_w\Fdot\subset\Fl(\adot,V)$, we have 
 ${\perp}(X_w\Fdot)=X_{w_0ww_0}\Fdot^\perp$.
\end{lemma}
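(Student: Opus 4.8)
The plan is to reduce the flag-manifold statement to the Grassmannian case already established in Lemma~\ref{L:dualSchub}, using the characterization~\eqref{Eq:GrassProjection} of $X_w\Fdot$ via its projections to the Grassmannians $\Gr(a,V)$ for $a\in\adot$. First I would record the two elementary combinatorial facts promised in the paragraph preceding the statement: that $\ell(w)=\ell(w_0ww_0)$ and that $w\in W^{\adot}$ if and only if $w_0ww_0\in W^{\adot^r}$; these follow directly from $w_0$ being an involution reversing the order on $[n]$, so conjugation by $w_0$ sends descents at position $i$ to descents at position $n{-}i$ and inversions to inversions. Together with $\dim(\adot)=\dim(\adot^r)$ (both sums being invariant under the substitution $a\mapsto n{-}a$ and reversal), this shows $X_{w_0ww_0}\Fdot^\perp\subset\Fl(\adot^r,V^*)$ is a well-defined Schubert variety of the expected dimension, so the statement even typechecks.

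Next I would unwind what it means for $K\in\Fl(\adot^r,V^*)$ to lie in $X_{w_0ww_0}\Fdot^\perp$ using~\eqref{Eq:GrassProjection}: this holds if and only if for each $a\in\adot$, writing $b:=n{-}a\in\adot^r$, the $b$-plane $K_b$ lies in $X_{(w_0ww_0)|_b}\,\Fdot^\perp\subset\Gr(b,V^*)$. The key step is then the identification $(w_0ww_0)|_b = (w|_a)^\perp$ as elements of $\binom{[n]}{n-a}$; this is a routine check from the definitions of $\perp$ on subsets (namely $j\in\alpha^\perp$ iff $n{+}1{-}j\notin\alpha$) and of the conjugation action of $w_0$ on $\{w(1),\dots,w(a)\}$, and it is exactly the compatibility between the two notions of ``dual'' that makes everything fit. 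Granting this, Lemma~\ref{L:dualSchub} applied at level $a$ gives ${\perp}(X_{w|_a}\Fdot) = X_{(w|_a)^\perp}\Fdot^\perp = X_{(w_0ww_0)|_b}\Fdot^\perp$ in $\Gr(b,V^*)$.

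Finally I would assemble the pieces. For $V_{\adot}\in\Fl(\adot,V)$ put $K:={\perp}(V_{\adot})$, so $K_b=(V_a)^\perp$ for $b=n{-}a$. Then $V_{\adot}\in X_w\Fdot$ iff $V_a\in X_{w|_a}\Fdot$ for all $a\in\adot$ (by~\eqref{Eq:GrassProjection}), iff $(V_a)^\perp\in {\perp}(X_{w|_a}\Fdot)=X_{(w_0ww_0)|_b}\Fdot^\perp$ for all $b\in\adot^r$ (by the Grassmannian case, since $\perp$ on each $\Gr$ is a bijection), iff $K\in X_{w_0ww_0}\Fdot^\perp$ (again by~\eqref{Eq:GrassProjection}, now in $\Fl(\adot^r,V^*)$). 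Since $\perp$ is a bijection on the flag manifold, this proves ${\perp}(X_w\Fdot)=X_{w_0ww_0}\Fdot^\perp$ as sets; as in Lemma~\ref{L:dualSchub} the equality is in fact scheme-theoretic because $\perp$ is an isomorphism of varieties carrying the defining rank conditions~\eqref{Eq:SchubertVarietyTypeA} to the corresponding ones for the dual. The only real obstacle is the bookkeeping in the step $(w_0ww_0)|_b=(w|_a)^\perp$: one must be careful that restricting the conjugate permutation to its first $b$ values and then relabeling by $j\mapsto n{+}1{-}j$ produces precisely the complement of $\{w(1),\dots,w(a)\}$, and that the flag $\Fdot^\perp$ is indexed so that its $b$-plane is $(F_{n-b})^\perp=(F_a)^\perp$, matching the indexing used in Lemma~\ref{L:dualSchub}; everything else is formal.
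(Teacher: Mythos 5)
The paper does not include an explicit proof of this lemma; it is stated as "the version of Lemma~\ref{L:dualSchub} for $\Fl(\adot,V)$," and the reader is implicitly invited to adapt the Grassmannian argument. Your proposal supplies that proof, and it is correct. Reducing to the Grassmannian case via the characterization~\eqref{Eq:GrassProjection} is a clean way to do it, and it replaces the chain of dimension-count equivalences in the proof of Lemma~\ref{L:dualSchub} with a single combinatorial identity, $(w_0ww_0)|_b = (w|_a)^\perp$ for $b=n{-}a$. That identity does hold: $(w_0ww_0)(k) = n{+}1{-}w(n{+}1{-}k)$, so $(w_0ww_0)|_b = \{n{+}1{-}w(j) : j=a{+}1,\dotsc,n\}$, which is precisely $\{j : n{+}1{-}j\notin\{w(1),\dotsc,w(a)\}\} = (w|_a)^\perp$. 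The preliminary facts you record ($\ell(w)=\ell(w_0ww_0)$, $w\in W^{\adot}\Leftrightarrow w_0ww_0\in W^{\adot^r}$, the indexing $(\Fdot^\perp)_b=(F_a)^\perp$) all check out and are exactly what is needed to make~\eqref{Eq:GrassProjection} apply on the dual side. The alternative route, mimicking the computation in the proof of Lemma~\ref{L:dualSchub} directly from~\eqref{Eq:SchubertVarietyTypeA} while tracking the indices $a_i$, would amount to the same thing with more bookkeeping; your version localizes the bookkeeping in the one identity $(w_0ww_0)|_b = (w|_a)^\perp$, which is arguably the clearer presentation. One minor remark: the scheme-theoretic refinement you mention at the end is not part of the lemma's claim (that refinement appears separately in Lemma~\ref{L:diagonal}), so it is harmless but unnecessary here.
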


For $w\in W^{\adot}$, define \defcolor{$\calN_w$} to be the set of 
$n\times(n{-}a_1)$-matrices,
\[
   \calN_w\ :=\ \{ (M\cdot w_0)^T\;\mid\; M\in\calM_{w_0ww_0}\}\,,
\]
which are obtained by first reversing the columns of a matrix in $\calM_{w_0ww_0}$ and then
transposing the result.
We state the analog of Lemma~\ref{L:dual_parametrization} in this context.

\begin{lemma}
 Let $\Phi$ be a nonsingular $n\times n$ matrix.
 Under the map $N\mapsto C_{\adot^r}(\Phi^{-1}N)$ the set $\calN_w$ of matrices for
 $w\in W^{\adot}$ parametrizes the Schubert cell $X_{w_0ww_0}^\circ(\Rdot(\Phi))^\perp$ in 
 $\Fl(\adot^r,V^*)$.  
\end{lemma}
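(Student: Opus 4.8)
The plan is to deduce this lemma from the corresponding Grassmannian statement (Lemma~\ref{L:dual_parametrization}) together with the fibered description~\eqref{Eq:GrassProjection} of Schubert cells in the flag manifold. First I would unwind the definition: $\calN_w=\{(M w_0)^T\mid M\in\calM_{w_0ww_0}\}$, so an element $N\in\calN_w$ has the form $N=(Mw_0)^T=w_0 M^T$ for $M\in\calM_{w_0ww_0}$, and $\Phi^{-1}N=\Phi^{-1}w_0M^T$. Since $\calM_{w_0ww_0}$ parametrizes $X^\circ_{w_0ww_0}\Edot\subset\Fl(\adot^r,V)$ via $M\mapsto R_{\adot^r}(M)$, I need to track how the operations ``transpose,'' ``reverse columns,'' and ``multiply by $\Phi^{-1}$'' convert the row-span flag $R_{\adot^r}(M)$ into the column-span flag $C_{\adot^r}(\Phi^{-1}N)$, and show the result is the image of $R_{\adot^r}(M)$ under $\perp$ applied in the flag $(\Rdot(\Phi))^\perp$.

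The cleanest route is to verify the claim one dimension $a\in\adot$ at a time, using~\eqref{Eq:GrassProjection}: $V_{\adot^r}\in X^\circ_{w_0ww_0}(\Rdot(\Phi))^\perp$ iff $V_{a^r}\in X^\circ_{(w_0ww_0)|_{a^r}}(\Rdot(\Phi))^\perp$ for every $a^r\in\adot^r$. For each such $a^r$, the top $a^r$ rows of a matrix $M\in\calM_{w_0ww_0}$ form (after the appropriate pivot bookkeeping) a matrix in the Grassmannian stratum $\calM^{\gamma}$ or $\calM_\gamma$ for the induced Grassmannian condition $\gamma=(w_0ww_0)|_{a^r}$, which by the $t=1$ case recorded after~\eqref{Eq:GrassProjection} is exactly the $\Gr(a^r,V)$ situation governed by Lemma~\ref{L:dual_parametrization}. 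Hence the column span $C_{a^r}$ of the corresponding block of $\Phi^{-1}N$ lands in $X^\circ_{\gamma^\perp}(\text{flag})$; assembling these over all $a^r\in\adot^r$ and checking the nesting is automatic (the blocks are nested because the row blocks of $M$ are nested, transposition and column-reversal being block-triangular-compatible). Finally I would invoke that $\calM_{w_0ww_0}\simeq\C^{\ell(w_0ww_0)}=\C^{\ell(w)}$ has the right dimension, and that $N\mapsto\Phi^{-1}N$ and $N\mapsto N^T$ are injective, to conclude the parametrization is a bijection onto the whole cell rather than merely a dense subset.

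The main obstacle I anticipate is purely bookkeeping rather than conceptual: getting the index conventions to line up. One must be careful that reversing columns by $w_0$ interacts correctly with the reversal $\adot\mapsto\adot^r$ and with the conjugation $w\mapsto w_0ww_0$, and that the dual flag of $\Rdot(\Phi)$ really is ``spanned by the last columns of $\Phi^{-1}$'' in the order required by $\calN_w$'s definition --- this is exactly the computation already carried out at the end of Section~\ref{SS:dual} for the Grassmannian ($\calN_\alpha$ parametrizes $X_{\alpha^\perp}\Edot^\perp$ via $N\mapsto C_{n-\ell}(\Phi^{-1}N)$), so the flag-manifold case is obtained by applying that identity in each of the $t$ ambient Grassmannians $\Gr(a^r,V)$ simultaneously and noting the resulting subspaces are automatically nested. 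Since the paper states ``We state the analog'' and the preceding Grassmannian lemma was left to the reader, I expect the intended proof is likewise a one-line reduction: \emph{apply Lemma~\ref{L:dual_parametrization} in each factor $\Gr(a,V)$ of the product $\prod_{a\in\adot}\Gr(a,V)$ into which $\Fl(\adot,V)$ embeds, and use~\eqref{Eq:GrassProjection}.}
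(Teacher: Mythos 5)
The paper gives no proof of this lemma---it is presented as ``the analog of Lemma~\ref{L:dual_parametrization},'' which was itself left to the reader---so you are genuinely supplying the missing argument, not reproducing one. Your strategy of reducing to the Grassmannian via~\eqref{Eq:GrassProjection} and Lemma~\ref{L:dual_parametrization}, coupled with the observations that column reversal by $w_0$ handles the passage from the standard coordinate flag to $\Edot^\perp$ and that $\Phi^{-1}$ effects the change of basis from $\Edot^\perp$ to $(\Rdot(\Phi))^\perp$, identifies exactly the right ingredients, and the argument goes through. The one spot to tighten is your phrase ``after the appropriate pivot bookkeeping'': the top $n{-}a_j$ rows of a matrix $M\in\calM_{w_0ww_0}$ are \emph{not} themselves a matrix in $\calM_\gamma$ for $\gamma=(w_0ww_0)|_{n-a_j}$, since they carry extra forced zeros in columns containing pivots of lower rows. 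What is true, and what you actually need, is only that their \emph{row span} coincides with that of the unique echelon matrix in $\calM_\gamma$ representing the same $(n{-}a_j)$-plane; since Schubert cell membership depends only on the row span, this suffices, but it is worth saying explicitly rather than leaving it as ``bookkeeping.''

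A somewhat tidier route, which avoids echelonizing sub-blocks and avoids the coordinate-by-coordinate reduction altogether, is the direct change-of-basis argument: the set $\calM_{w_0ww_0}$ parametrizes via row spans the Schubert cell $X^\circ_{w_0ww_0}$ of the standard coordinate flag, whether that flag sits in $V$ or, using the dual basis $\be^*_1,\dotsc,\be^*_n$, in $V^*$; the column reversal $M\mapsto Mw_0$ is the linear coordinate change $\be^*_i\mapsto\be^*_{n+1-i}$ on $V^*$, which carries the standard coordinate flag of $V^*$ to $\Edot^\perp$; transposition converts row spans to column spans; and $N\mapsto\Phi^{-1}N$ is the linear isomorphism of $V^*$ carrying $\Edot^\perp$ to $(\Rdot(\Phi))^\perp$, because the annihilator of the row span of $\Phi_j$ is the column span of the last $n{-}j$ columns of $\Phi^{-1}$. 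Each of these is a linear isomorphism of $V^*$, hence carries Schubert cells to Schubert cells with the same index; composing them gives the lemma in one stroke. Both your approach and this one are valid; this one buys you a shorter write-up, while yours makes the parallel with~\eqref{Eq:GrassProjection} explicit.
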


Let us consider this duality in terms of bilinear equations in Stiefel coordinates.
Suppose that $M$ is a full rank $a_t\times n$ matrix with corresponding flag
$V_{\adot}:=R_{\adot}(M)$ and $N$ a full rank $n\times(n{-}a_1)$ matrix with corresponding flag
$\defcolor{U_{\adot^r}}:=C_{\adot^r}(N)$.
As in Section~\ref{S:dual}, the columns of $M$ correspond to a basis $\be_1,\dotsc,\be_n$ of
$V$ and the rows of $N$ to the dual basis $\be_1^*,\dotsc,\be_n^*$ of $V^*$.
Then the condition that $U_{\adot^r}=V_{\adot}^\perp$ is that
\[
   U_{a^r_{t+1-j}}\ =\ U_{n-a_j}\ =\ (V_{a_j})^\perp
   \qquad\mbox{for } j=1,\dotsc,t\,.
\]
In terms of the matrices $M$ and $N$ this is 
 \begin{equation}\label{Eq:ZeroProduct}
   M_{a_j}\cdot N'_{n{-}a_j}\ =\ 0_{a_j\times(n{-}a_j)}
   \qquad\mbox{for } j=1,\dotsc,t\,.
 \end{equation}
(Here $N'_{n{-}a_j}$ is the matrix formed by the first $n{-}a_j$ columns of $N$.) 
This set of $\sum_j a_j(n{-}a_j)$ bilinear equations is redundant.
Write \defcolor{$\mbox{\rm row}_i(M)$} for the $i$th row of the matrix $M$ and 
\defcolor{$\mbox{\rm col}_j(N)$} for the $j$th column of $N$.
The following lemma is easily verified.

\begin{lemma}
 The set of bilinear equations~\eqref{Eq:ZeroProduct} for 
 $(M,N)\in\Mat_{a_t\times n}(\C)\times \Mat_{n\times(n-a_1)}(\C)$ consists of the 
 equations 
\[
    \mbox{\rm row}_i(M)\cdot\mbox{\rm col}_j(N)\ =\ 0\,,
\]
 for those $(i,j)$ such that there is some $k$ with $i\leq a_k$ and 
 $j\leq(n{-}a_k)$.
 There are exactly $\dim\adot$ such equations.
\end{lemma}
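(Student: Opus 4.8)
The plan is to treat the lemma's two claims—\emph{which} scalar equations occur in~\eqref{Eq:ZeroProduct}, and that there are exactly $\dim\adot$ of them—separately; both are routine once the block notation is unwound.

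For the first claim, observe that $M_{a_j}$ is built from the first $a_j$ rows of $M$ and $N'_{n-a_j}$ from the first $n-a_j$ columns of $N$ (and $n-a_j\le n-a_1$, so those columns exist). Hence the $(p,q)$-entry of $M_{a_j}N'_{n-a_j}$ is $\mathrm{row}_p(M)\cdot\mathrm{col}_q(N)$, so the matrix equation $M_{a_j}N'_{n-a_j}=0$ is exactly the system of scalar equations $\mathrm{row}_p(M)\cdot\mathrm{col}_q(N)=0$ over $1\le p\le a_j$ and $1\le q\le n-a_j$. Taking the union over $j=1,\dots,t$, the system~\eqref{Eq:ZeroProduct} consists of the equations $\mathrm{row}_i(M)\cdot\mathrm{col}_j(N)=0$ for exactly those index pairs $(i,j)$ admitting a $k$ with $i\le a_k$ and $j\le n-a_k$. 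Since $M$ and $N$ have unconstrained entries, distinct index pairs give distinct (indeed monomially disjoint, hence linearly independent) equations, so it remains to count these pairs.

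For the second claim I would slice the index set by its first coordinate. For $i\in\{1,\dots,a_t\}$ set $c(i):=\min\{a_k\mid a_k\ge i\}$; then $a_k=c(i)$ precisely when $a_{k-1}<i\le a_k$ (with $a_0:=0$), so exactly $a_k-a_{k-1}$ indices $i$ have $c(i)=a_k$. Since the $a_k$ increase, a pair $(i,j)$ occurs iff some $a_k\ge i$ also has $a_k\le n-j$; as $a_k\ge i$ forces $a_k\ge c(i)$, this holds iff $c(i)\le n-j$, i.e.\ iff $j\le n-c(i)$. Thus each $i$ contributes exactly $n-c(i)$ values of $j$, all in the valid range $1,\dots,n-a_1$. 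Summing over $i$, the number of index pairs is
\[
   \sum_{i=1}^{a_t}\bigl(n-c(i)\bigr)\ =\ \sum_{k=1}^{t}(a_k-a_{k-1})(n-a_k)\ =\ \dim\adot\,.
\]

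There is no real obstacle here; the content is bookkeeping, as the paper's ``easily verified'' signals. The one point needing a little care is organizing the count so that it reproduces the defining formula for $\dim\adot$ directly—slicing by the first coordinate via $c(i)$ does exactly this. (Slicing by the second coordinate instead leads to $\sum_{k=1}^{t}(a_{k+1}-a_k)a_k$ with $a_{t+1}:=n$, which equals $\dim\adot$ via the telescoping identity $(a_{k+1}-a_k)a_k+(a_{k+1}-a_k)a_{k+1}=a_{k+1}^2-a_k^2$.)
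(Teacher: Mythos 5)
Your proof is correct, and since the paper explicitly leaves this lemma as an exercise ("easily verified"), there is no competing argument in the paper to compare against. Your two-step organization—unwinding the block products $M_{a_j}N'_{n-a_j}$ into scalar equations $\mathrm{row}_p(M)\cdot\mathrm{col}_q(N)=0$, then counting index pairs by slicing over $i$ via $c(i)=\min\{a_k\mid a_k\ge i\}$—is the natural bookkeeping the authors had in mind, and the slicing reproduces the defining sum $\sum_k(a_k-a_{k-1})(n-a_k)=\dim\adot$ directly.
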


Write \defcolor{$MN=0_{\adot}$} for this set of $\dim \adot$ bilinear equations.
We give the primal-dual formulation of Schubert problems in $\Fl(\adot,V)$, which follows
from the reduction to the dual diagonal as in Section~\ref{S:dual}.

\begin{theorem}\label{th:main_A}
 Let $(w^1,\dotsc,w^s)$ be a Schubert problem for $\Fl(\adot,V)$ and suppose that
 $\Fdot^1,\dotsc,\Fdot^s$ are general flags given by matrices $\Phi^1,\dotsc,\Phi^s$ so
 that $\Fdot^i=\Rdot(\Phi^i)$. 
 Then the system of equations
 \begin{equation}\label{Eq:Flag_PDF}
   (M\Phi^s)(\Phi^i)^{-1}N_i\ =\ 0_{\adot}
   \qquad\mbox{for}\ i=1,\dotsc,s{-}1\,,
 \end{equation}
 where 
 $(M,N_1,\dotsc,N_{s-1})\in\calM_{w^s}\times\calN_{w^1}\times\dotsb\times\calN_{w^{s-1}}$ 
 defines the instance of the Schubert problem~\eqref{Eq:Flag_SP} as the common zeroes of 
 $(s{-}1)\cdot \dim(\adot)$ bilinear equations in  $(s{-}1)\cdot \dim(\adot)$
 variables, exhibiting the Schubert problem as a square system of equations.
\end{theorem}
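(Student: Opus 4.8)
The plan is to run the proof of Theorem~\ref{th:main} essentially verbatim, substituting the flag-manifold constructions of this section for the Grassmannian ones of Section~\ref{S:dual}. First I would record the flag analogues of Lemmas~\ref{L:ellDiagonal} and~\ref{L:ellBilinear}. Introduce the dual diagonal $\Delta^s\colon\Fl(\adot,V)\to\Fl(\adot,V)\times(\Fl(\adot^r,V^*))^{s-1}$ sending $U_\adot$ to $(U_\adot,U_\adot^\perp,\dots,U_\adot^\perp)$. The reduction-to-the-diagonal argument of Lemma~\ref{L:diagonal} (pull back the ideal of the product along $\Delta^s$, obtaining the ideal generated by the ideals of the factors) yields, for any $A_1,\dots,A_s\subset\Fl(\adot,V)$, the scheme-theoretic equality
\[
  \Delta^s(A_1\cap\dots\cap A_s)\ =\ \bigl(A_1\times{\perp}(A_2)\times\dots\times{\perp}(A_s)\bigr)\cap\Delta^s(\Fl(\adot,V))\,.
\]
Combining this with the lemma that $V_\adot^\perp=C_{\adot^r}(N)$ is equivalent to $MN=0_\adot$ (a system of exactly $\dim\adot$ independent bilinear equations, after discarding the redundancies in~\eqref{Eq:ZeroProduct}), I obtain: if $\calM$ parametrizes $A_1$ via $M\mapsto R_\adot(M)$ and $\calN_i$ parametrizes ${\perp}(A_i)$ via $N_i\mapsto C_{\adot^r}(N_i)$, then $\Delta^s(A_1\cap\dots\cap A_s)$ is cut out in $\calM\times\calN_2\times\dots\times\calN_s$ by $MN_i=0_\adot$ for $i=2,\dots,s$.

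Next I would specialize to the flags $\Fdot^i=\Rdot(\Phi^i)$. Take $A_1=X^\circ_{w^s}\Fdot^s$, parametrized by $\calM_{w^s}$ through $M\mapsto R_\adot(M\Phi^s)$, and for $i=1,\dots,s{-}1$ take $A_i=X^\circ_{w^i}\Fdot^i$, whose image under $\perp$ equals $X^\circ_{w_0w^iw_0}(\Fdot^i)^\perp$ and is parametrized by $\calN_{w^i}$ through $N_i\mapsto C_{\adot^r}((\Phi^i)^{-1}N_i)$; this is exactly the flag analogue of Lemma~\ref{L:dual_parametrization}, in which the $(\Phi^i)^{-1}$ twist is already built in since the dual flag of $\Rdot(\Phi^i)$ is spanned by the trailing columns of $(\Phi^i)^{-1}$. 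Feeding the ``raw'' matrices $M\Phi^s$ and $(\Phi^i)^{-1}N_i$ into the relations $MN_i=0_\adot$ produces precisely the system~\eqref{Eq:Flag_PDF}, and since the flag analogue of Lemma~\ref{L:dualSchub} gives $V_\adot\in X_{w^i}\Fdot^i$ if and only if $V_\adot^\perp\in X_{w_0w^iw_0}(\Fdot^i)^\perp$, the zero locus of~\eqref{Eq:Flag_PDF} projects bijectively on the $M$-factor onto $\bigcap_{i=1}^s X^\circ_{w^i}\Fdot^i$.

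Then I would invoke Kleiman's Transversality Theorem for the two remaining points. First, the intersection~\eqref{Eq:Flag_SP} is transverse, hence zero-dimensional and reduced, so each solution occurs with multiplicity one. Second, all solutions of~\eqref{Eq:Flag_SP} must lie in the open cells, not merely the Schubert varieties: were a solution to lie in a smaller cell $X^\circ_v\Fdot^i$ with $v<w^i$, it would lie in $X_v\Fdot^i\cap\bigcap_{j\neq i}X_{w^j}\Fdot^j$, which has strictly negative expected dimension and so is empty for general flags, a contradiction. Hence $\bigcap_i X_{w^i}\Fdot^i=\bigcap_i X^\circ_{w^i}\Fdot^i$ and the formulation faithfully captures the instance. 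Finally the count: each of the $s{-}1$ blocks $MN_i=0_\adot$ contributes $\dim\adot$ equations; and since $\calM_{w^s}\simeq\C^{\ell(w^s)}$ while each $\calN_{w^i}\simeq\calM_{w_0w^iw_0}\simeq\C^{\ell(w^i)}$ (using $\ell(w_0w^iw_0)=\ell(w^i)$), the number of variables is $\sum_{i=1}^s(\dim\adot-|w^i|)=s\dim\adot-\dim\adot=(s{-}1)\dim\adot$, so the system is square.

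The step I expect to be the main obstacle is the coordinate bookkeeping in the second paragraph: keeping the two dual bases straight and tracking the placement of $\Phi^s$ versus $(\Phi^i)^{-1}$ so that the abstract relations $MN_i=0_\adot$ from the reduction-to-the-diagonal lemma become exactly~\eqref{Eq:Flag_PDF}, together with the verification that the surviving bilinear equations in $MN=0_\adot$ number precisely $\dim\adot$ rather than $\sum_j a_j(n{-}a_j)$. Everything else is a direct transcription of the proof of Theorem~\ref{th:main}, with $\ell(n{-}\ell)$ replaced by $\dim\adot$ and the Grassmannian Schubert data replaced by their flag-manifold counterparts.
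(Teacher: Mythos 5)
Your proposal is correct and follows exactly the route the paper intends: the paper simply says the theorem ``follows from the reduction to the dual diagonal as in Section~\ref{S:dual},'' and you have carried out that reduction by transcribing the proof of Theorem~\ref{th:main} with $\Gr(\ell,V)$ replaced by $\Fl(\adot,V)$, $\ell(n{-}\ell)$ by $\dim\adot$, and the $\calM_\alpha$, $\calN_\alpha$ parametrizations by $\calM_w$, $\calN_w$. The one cosmetic slip is an index collision in the middle paragraph (you set $A_1=X^\circ_{w^s}\Fdot^s$ and then reuse $A_i$ for $i=1,\dotsc,s{-}1$); relabeling the dual factors $A_2,\dotsc,A_s$ fixes it without affecting the argument.
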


As in Corollary~\ref{C:Primal_dual_Schub_Var}, these are scheme-theoretic equations for the Schubert problem in local coordinates.

As in Section~\ref{S:improve}, there are some improvements  to this
formulation. 
Rather than formulate a general result along the lines of Theorem~\ref{th:main-III}, we will
make a series of remarks indicating some reductions that are possible. 

For $\alpha\in\binom{[n]}{a}$ with $a\in\adot$, there are many permutations
$w\in W^{\adot}$ with $w|_a=\alpha$.
(These are in  bijection with 
 $W_{\{a\}}/W_{\adot}=(S_a\times S_{n-a})/(S_{a_1}\times S_{a_2-a_1}\times\dotsb\times
 S_{n-a_t})$.)
Write \defcolor{$w(\alpha,\adot)$}, or \defcolor{$w(\alpha)$} when $\adot$ is assumed, for
the longest such permutation in $W^{\adot}$.
For example, if $n=8$ and $\adot=(2<4<5)$, then $w(2,5,6,8)=68{.}25{.}7{.}134$.
Observe that $|\alpha|=|w(\alpha)|$.

Schubert varieties given by permutations $w(\alpha)$ 
have a form which we exploit to reduce the number of equations and variables
in~\eqref{Eq:Flag_PDF}. 
By~\eqref{Eq:GrassProjection}, if $\alpha\in\binom{[n]}{a}$ where $a\in\adot$,
then 
\[
   X_{w(\alpha,\adot)}\Fdot\ =\ \{V_{\adot}\in\Fl(\adot,V)\;\mid\;  V_a\in X_{\alpha}\Fdot\}\,.
\]
In particular, we have
\[
  V_{\adot}\;\in\; X_{w(\alpha,\adot)}\Fdot\ \Longleftrightarrow\ 
    (V_a)^\perp\;\in\; X_{\alpha^\perp}\Fdot^\perp\,.
\]

\begin{remark}\label{R:RedOne}
  Suppose that one permutation in Theorem~\ref{th:main_A}, say $w^i$, has the form
  $w(\alpha,\adot)$ for $\alpha\in\binom{[n]}{a}$ with $a\in\adot$.
  Then we may replace the factor $\calN_{w^i}$ by the space of $n\times(n{-}a)$-matrices
  $\calN_\alpha$ and the $i$th set of equations in~\eqref{Eq:Flag_PDF} by 
\[
   (M_a\Phi^s)(\Phi^i)^{-1}N\ =\ 0_{a\times(n-a)}\,,
\]
  where $N\in\calN_{\alpha}$.
  This uses $\dim\adot-a(n{-}a)$ fewer equations and variables.
\hfill{$\diamond$}
\end{remark}

\begin{remark}\label{R:Grass_coords}
 Suppose that two of the permutations have this form, say $w^i=w(\alpha,\adot)$ and
 $w^j=w(\beta,\adot)$ for $\alpha,\beta\in\binom{[n]}{a}$ with $a\in\adot$.
 There is a matrix $\Phi$ so that $\Fdot^i=\Rdot(\Phi)$ and $\Fdot^j=\Rdot(w_0\Phi)$ are the
 pair of opposite flags associated to $\Phi$.
 In Theorem~\ref{th:main_A} we can replace the factors
 $\calN_{w^i}\times\calN_{w^j}$ by the factor $\calN_{\alpha}^{\beta}$ and the equations
 indexed by $i$ and $j$ in~\eqref{Eq:Flag_PDF} by the equations
\[
   (M_a\Phi^s)\cdot(\Phi^{-1}N)\ =\ 0_{a\times(n-a)}\,,
\]
 where $N\in\calN_{\alpha}^{\beta}$.
 This uses $2\dim\adot-a(n{-}a)$ fewer equations and variables.
\hfill{$\diamond$}
\end{remark}

\begin{remark}\label{R:codimOne}
 If some permutation $w^i$ has $|w^i|=1$, then it has the form
 $w(\alpha,\adot)$ where $\alpha\in\binom{[n]}{a}$ satisfies $|\alpha|=1$ so that
 $\alpha=\I$. 
 Write $w(\I,a,\adot)$ or $w(\I,a)$ for this permutation.
 We may remove the factor $\calN_{w^i}$ 
 and replace the $i$th set of equations in~\eqref{Eq:Flag_PDF}
 with the single determinantal equation
\[
   \det\left(\begin{array}{c} M_aF^s\\\Phi^i_{n-a}\end{array}\right)\ =\ 0\,.
\]
 This uses $\dim\adot-1$ fewer equations and variables.
\hfill{$\diamond$}
\end{remark}

\begin{remark}\label{R:bdot}
 We generalize Remark~\ref{R:RedOne}.
 Let $\bdot\subsetneq\adot$ and suppose that $v\in W^{\bdot}$.
 Write $\defcolor{w(v,\adot)}\in W^{\adot}$ for the longest permutation in $W^{\adot}$ 
 that lies in the coset $v W_{\bdot}$.
 Then $|w(v,\adot)|=|v|$, or more precisely
 $\dim\adot-\ell(w(v,\adot))=\dim\bdot-\ell(v)$.
 For example, if $n=8$, $\adot=(2<4<5)$ and 
 $v=68257134\in W^{(2<5)}$, then $w(v,\adot)=68.57.2.134$.
 Also, if $v=47681235\in W^{(2<4)}$, then 
 $w(v,\adot)=47.68.5.123$, and if $v=34685127\in W^{(4<5)}$, then
 $w(v,\adot)=68.34.5.127$.

 A flag $V_{\adot}\in X_{w(v,\adot)}\Fdot$ if and only if the subflag $V_{\bdot}$
 consisting of the subspaces with dimensions in $\bdot$ lies in 
 the Schubert variety $X_v\Fdot$ of $\Fl(\bdot,V)$.
 In this case we have that 
\[
  V_{\adot}\;\in\; X_{w(v,\adot)}\Fdot\ \Longleftrightarrow\ 
    (V_{\bdot})^\perp\;\in\; X_{w_0vw_0}\Fdot^\perp\;\subset\ \Fl(\bdot^r,V^*)\,.
\]

 If one of the permutations $w^i$ in Theorem~\ref{th:main_A} has
 the form $w(v,\adot)$ for $v\in W^{\bdot}$, then we may replace the set
 $\calN_{w^i}$ of matrices parametrizing 
 $X_{w_0w^iw_0}\Fdot^\perp$ by the set
 of matrices $\calN_v$ parametrizing $X_{w_0vw_0}\Fdot^\perp\subset\Fl(\bdot^r,V^*)$, and
 the $i$th set of equations in~\eqref{Eq:Flag_PDF} by
\[
   (M\Phi^s)(\Phi^i)^{-1}N\ =\ 0_{\bdot}\,,
\]
 for $N\in N_v$.
 This uses $\dim \adot-\dim \bdot$ fewer equations and variables.
\hfill{$\diamond$}
\end{remark}

\begin{example}
 Suppose $n=8$ and $\adot=(2<4<5)$.
 Note that 
\[
    \dim\adot\ =\ 2(8{-}2)+(4{-}2)(8{-}4)+(5{-}4)(8{-}5)\ =\ 23\,.
\]
 Using $(w)^m$ to indicate that $w$ is repeated $m$ times, consider the Schubert problem,
\[
   48573126\,,\ 
   (78453126)^2\,,\ 
   (68574123)^2\,,\ 
   (78465123)^3\,,\ 
   47385126\,,
\]
 with 128 solutions.
 The first permutation is $w(v)$ where $v=48.357.126\in W^{(2<5)}$, the
 next two are $w(\alpha)$ where $\alpha=(3,4,6,7,8)$, the next two are 
 $w(\I,2)$, and the next three are $w(\I,4)$.
 Only the last permutation does not reduce to a smaller flag manifold.

 By Theorem~\ref{th:main_A}, this has a formulation as a square system of 
 $8\cdot 23=184$ bilinear equations and variables.

 However, we may use the reduction of Remark~\ref{R:bdot} with the first permutation to
 use $2=\dim\adot-\dim\bdot$ fewer equations, then the reduction of
 Remark~\ref{R:Grass_coords} with the second and third permutations to use
 $2\cdot\dim\adot-5(8-5)=31$ fewer equations, and then the reduction of
 Remark~\ref{R:codimOne} on the next five permutations to use $5(23{-}1)=110$ fewer
 equations, and thereby obtain a square system with $184-2-31-110=41$ variables, $36$ bilinear
 equations, and $5$ determinantal equations involving $2$ quadratic determinants and $3$
 quartic determinants.

 We used Bertini \cite{BHSW06} to solve a random real instance of this Schubert problem with
 the square formulation of $41$ variables. 
 This consumed about $2.95$ gigaHertz-days of processing power to calculate $128$
 approximate solutions, and the output suggests that $42$ solutions are real flags while
 the other $86$ are nonreal. 
 As in Example \ref{Ex:437}, this computation may be done in parallel efficiently.

 We used rational arithmetic in alphaCertified \cite{alphaCertified} to prove that the $128$
 solutions given by Bertini are approximate solutions to the computed instance, and that they
 correspond to distinct solutions. 
 As in Example \ref{Ex:437}, available theorems guarantee that we found approximations for every solution to the instance.
 We also verified that $42$ of the approximate solutions correspond to real solutions.
 This calculation took $1.78$ gigaHertz-hours of processing time.
 This computation may also be done efficiently in parallel.
 Details for these computations and those in Example \ref{Ex:437} may be found
  at \url{http://www.unk.edu/academics/math/_files/primal-dual.html}.
\hfill{$\diamond$}
\end{example}
\providecommand{\MR}{\relax\ifhmode\unskip\space\fi MR }
\providecommand{\MRhref}[2]{%
  \href{http://www.ams.org/mathscinet-getitem?mr=#1}{#2}
}
\providecommand{\href}[2]{#2}

\end{document}